\documentclass[11pt]{article}

\usepackage{amssymb,latexsym,amsmath,amsthm,multicol,float,amstext, color}
\oddsidemargin 0.3in
\evensidemargin 0in
\topmargin -0.6in
\textwidth 6.0in
\textheight 8.6in

\numberwithin{equation}{section}

\newcommand{\be}{\begin{eqnarray}}
\newcommand{\ee}{\end{eqnarray}}
\newcommand{\ce}{\begin{eqnarray*}}
\newcommand{\de}{\end{eqnarray*}}
\newtheorem{thm}{Theorem}[section]
\newtheorem{lemma}[thm]{Lemma}
\newtheorem{remark}[thm]{Remark}

\newtheorem{prop}[thm]{Proposition}
\newtheorem{exam}[thm]{Example}
\newtheorem{cor}[thm]{Corollary}

\newcommand{\F}{\mathcal{F}}
\newcommand{\B}{\mathcal{B}}
\newcommand{\Le}{\mathcal{L}}
\newcommand{\HH}{\mathcal{H}}
\newcommand{\D}{\mathcal{D}}

\newcommand{\ex}{\mathbb{E}}
\newcommand{\pr}{\mathbb{P}}

\newcommand{\R}{\mathbb{R}}
\newcommand{\EE}{\mathbb{E}}

\newcommand{\la}{\langle}
\newcommand{\ra}{\rangle}
\newcommand{\HP}{\mathcal{H}\mathcal{P}}
\newcommand{\I}{\mathbb{I}}
\newcommand{\varE}{\mathcal{E}}

\newcommand{\var}{\mbox{\rm Var}}
\begin{document}

%\title {Sharp Space-Time Regularity of the Solution to a
%Stochastic Heat Equation Driven by a Fractional-Colored Noise}
\title {Sharp Space-Time Regularity of the Solution to
Stochastic Heat Equation Driven by Fractional-Colored Noise}
\author{Randall Herrell, Renming Song, Dongsheng Wu, and Yimin Xiao
}

\maketitle

\begin{abstract}
In this paper, we study the following stochastic heat equation
\[
		\partial_tu=\Le u(t,x)+\dot{B},\quad
		u(0,x)=0,\quad 0\le t\le T,\quad x\in\R^d,
\]
where $\Le$ is the generator of a L\'evy process $X$ taking value in $\R^d$, 
$B$ is a fractional-colored Gaussian noise with Hurst index $H\in\left(\frac12,\,1\right)$ 
for the time variable and spatial covariance function $f$ which 
is the Fourier transform of a tempered measure $\mu.$

After establishing the existence of solution for the stochastic heat equation, 
we study the regularity of the solution $\{u(t,x),\, t\ge 0,\, x\in\R^d\}$ 
%in both time and space variables. Under mild conditions, the main results give 
in both time and space variables. Under mild conditions, we give
%the exact uniform modulus of continuity and the Chung-type laws of iterated 
the exact uniform modulus of continuity and a Chung-type law of iterated
logarithm for the sample function $(t,x)\mapsto u(t,x)$.  
Our results generalize and strengthen the corresponding
results of Balan and Tudor (2008) and Tudor and Xiao (2017).
\end{abstract}

{Running head}: Sharp space-time regularity of the solution to a stochastic heat equation

{\it 2000 AMS Classification numbers}: 60G15, 60J55, 60G18, 60F25.

{\it Key words:} Stochastic heat equation, fractional-colored noise, temporal and spatial regularity.

\section{Introduction}
Stochastic partial differential equations (SPDE) driven by fractional Brownian motion (fBm) or other
fractional Gaussian noises have many applications in  biology, electrical engineering, finance, physics, among 
others, see, e.g., \cite{DMS03, BPS04, KouSinney04, CCL03}. The theoretical studies of SPDEs 
driven by fBm or other fractional Gaussian noises have been growing rapidly. We refer  
to, for example, \cite{BJQ15,balan,CHKN18, CHSS18, CHNT17,GLT06, HHNT15, HLN12, HN09, HNS11, 
maslowski-nualart03, nualart-vuillermont06, QS-tindel07, Song17,TTV} for recent developments.

In this paper, for a fixed constant $T>0,$ we consider the following stochastic heat equation 
\begin{equation}\label{Eq:she_l}
\partial_tu=\Le u(t,x)+\dot{B},\quad
u(0,x)=0,\quad 0\le t\le T,\quad x\in\R^d,
\end{equation}
where $\Le$ is the generator of a L\'evy process taking values in $\R^d$, and 
%where $B$ is a fractional-colored Gaussian noise with Hurst index $H\in\left(\frac12,\,1\right)$
 $B$ is a fractional-colored Gaussian noise with Hurst index $H\in\left(\frac12,\,1\right)$
in the time variable and spatial covariance function $f$ as in Balan and Tudor 
\cite{balan}. Namely, 
$$\left\{ B (t,A), t\in [0,T] , A\in \mathcal{B} (\mathbb{R}^{d}) \right\}, $$
%where $\mathcal{B} (\mathbb{R}^{d})$ denotes the family of Borel sets in $\mathbb{R}^{d}$,
is a centered Gaussian field  with covariance
%\begin{equation}
%\label{cov2}
$$
\EE \left( B (t, A) B (s, C) \right) = R_{H}(t,s) \int_{A}\int_{C} f(z-z') dzdz',
%\end{equation}
$$
where $R_{H}(t,s):=\frac{1}{2} (t^{2H}+ s^{2H} -\vert t-s \vert ^{2H} )$ is the covariance of
a fractional Brownian motion with index $H\in\left(\frac12,\,1\right)$, and $f$ is the Fourier transform of
a tempered measure $\mu$, which is defined by
$$
\int_{\mathbb{R}^d}f(x)\varphi(x)dx=\int_{\mathbb{R}^d}{\cal F}
%\varphi(\xi)\mu(d\xi), \quad \forall \varphi \in {\cal S}(\mathbb{R}^d), 
\varphi(\xi)\mu(d\xi), \quad \forall \varphi \in {\cal S}(\mathbb{R}^d).
$$
%where ${\cal S}(\mathbb{R}^d)$ denotes the Schwarz space on $\mathbb{R}^{d}$, and where 
Throughout this paper, $\mathcal{B} (\mathbb{R}^{d})$ denotes the family of Borel sets in $\mathbb{R}^{d}$, ${\cal S}(\mathbb{R}^d)$ denotes the Schwarz space on $\mathbb{R}^{d}$, and 
%${\cal F}\varphi$ denotes the Fourier transform of the function $\varphi$. Namely, 
${\cal F}\varphi$ denotes the Fourier transform of the function $\varphi$:
${\cal F}\varphi(\xi) = \int_{\R^d} e^{- i \langle \xi,  x\rangle } \varphi(x) dx.$ It is known that the 
mapping ${\cal F} : {\cal S}(\R^d) \to {\cal S}(\R^d)$ is an isomorphism which extends uniquely to a
unitary isomorphism of $L^2(\R^d).$

We will make use of the following %Parseval's
identity (cf. p.6 of \cite{Dalang99}): For any
$\varphi, \psi \in {\cal S}(\mathbb{R}^d)$,
\begin{equation}\label{parseval}
\int_{\mathbb{R}^d} \int_{\mathbb{R}^d} \varphi(x)f(x-y)\psi(y)dx dy=(2\pi) ^{-d}
\int_{\mathbb{R}^d}{\cal F} \varphi(\xi) \overline{{\cal F} \psi(\xi)}\mu(d\xi).
\end{equation}

%We point out here that, when $\Le=\Delta,$  the $d$-dimensional Laplacian operator, 
We point out here that, when $\Le=\Delta,$  the $d$-dimensional Laplacian, 
the existence of the solution 
of \eqref{Eq:she_l} has been studied by Balan and Tudor \cite{balan} and the regularity 
properties of the solution process $\{u(t, x), \, t\ge 0,\, x\in\R^d\}$ in the time variable $t$ %(while $x \in \R^d$ is fixed) 
(with $x \in \R^d$ fixed) 
%or the space variable $x$ (while $t > 0$ is fixed) has been studied by 
or the space variable $x$ (with $t > 0$ fixed) has been studied by 
Tudor and Xiao \cite{tudor}. 
The main objective of this paper is to study \eqref{Eq:she_l} for more general operator $\Le$ and to prove sharp 
%regularity properties of the solution in both time and space variables $(t, x)$ simultaneously.  
 regularity properties of the solution in time and space variables $(t, x)$ simultaneously.
Our results generalize and strengthen the corresponding results in \cite{balan, tudor} 
%to a class of broader stochastic heat equations. 
to a broader class of stochastic heat equations. 
Our approach is Fourier theoretical, and the main technical tool we use 
in this paper is the property of strong local nondeterminism of the Gaussian random field $\{u(t, x), \, t\ge 0,\, x\in\R^d\}$. 

We expect that, by developing approximation methods that are similar to those in \cite{KSXZ15,HP15} and by making use 
of the results in this paper, one can establish the exact uniform and local regularity results for the stochastic heat 
equation with multiplicative fractional Gaussian noises, particularly those that have been studied in \cite{BJQ15,CHKN18, 
CHNT17,CHSS18, HHNT15, HLN12, HN09, HNS11, Song17}.  We plan to pursue this line of research in a subsequent paper.

The rest of the paper is organized as follows. We first establish an existence result for the stochastic heat equation (\ref{Eq:she_l})
in Section \ref{Sec:Exi}, then study the regularity of the solution process under  mild conditions in Section \ref{Sec:Reg}. 
Finally, we provide a general result for a real valued centered Gaussian random field with stationary increments to be strongly 
locally nondeterministic in Section \ref{Sec:A}, 
%and we believe it has its own interest.
and we believe this general result is of independent interest.

Throughout this paper, for any appropriate measure $\mu$ on $\R^d$, we use $\F \mu(\xi)$
to denote the Fourier transform of $\mu$, that is
$$
\F\mu(\xi)=\int_{\R^d}e^{-i\langle \xi, x\rangle}\mu(dx), \qquad x\in \R^d.
$$

\noindent {\bf Acknowledgements}\, 
Research of Renming Song was supported in part by the Simons Foundation (\# 429343, Renming Song).
Research of Yimin Xiao was supported in part by grants from the National Science Foundation.

\section{Existence of the solution}\label{Sec:Exi}

Let $X=\{X_t,\,t\ge0\}$ be a L\'evy process taking values in $\R^d,$ with $X_0=0$ and characteristic exponent $\Psi(\xi)$ given by
\[
\EE\left(e^{i\la\xi, X_t\ra}\right)=e^{-t\Psi(\xi)},\qquad \forall \, t\ge0,\  \xi\in\R^d.
\]
Let $\Le$ be the generator of $X$. The domain of $\Le$ is given by
\[
{\rm Dom}(\Le)=\left\{\phi\in L^2\left(\R^d\right):\,\int_{\R^d}\left|\F^{-1} \phi(\xi)\right|^2\left|\Psi(\xi)\right|d\xi<\infty\right\},
\]
where $\F^{-1}$  denotes the inverse Fourier transform in $L^2\left(\R^d\right)$.

We first recall from \cite{balan} some facts about integration of deterministic functions with respect to the
%fractional-colored noise $B^H$. 
fractional-colored noise $B$.
Unless mentioned otherwise, we will use the same notation as in \cite{balan}.

Let $\D\left((0,\,T)\times\R^d\right)$ denote the space of all infinitely differentiable functions %whose support is compact and contained in $(0,\,T)\times\R^d,$ 
with compact support contained in $(0,\,T)\times\R^d$
and let $\HP$ be the completion of $\D\left((0,\,T)\times\R^d\right)$ with 
respect to the inner product
\begin{equation}\label{Eq:HP-ip}
\begin{split}
\la \varphi,\psi\ra_{\HP}
&=q_H\int_0^T\int_0^T\int_{\R^d}\int_{\R^d}\varphi(u,x)
|u-v|^{2H-2}f(x-y)\psi(v,y)dydxdudv\\
&=q_Hc_H\int_{\R}|\tau|^{1-2H} \int_{\R^d}\int_{\R^d}f(x-y)\F_{0,T} \varphi(\tau,x)
\overline{\F_{0,T} \psi(\tau,y)}dydxd\tau,
\end{split}
\end{equation}
where $q_H=H(2H-1),$ $c_H=\left[2^{2(1-H)}\sqrt{\pi}\right]^{-1}\Gamma(H-1/2)/\Gamma(1-H),$ and
$\F_{0,T} \varphi$ is the restricted Fourier transform of $\varphi$ in the variable $t\in(0,\,T)$ defined by
\[\F_{0,T}\varphi(\tau)=\int_0^Te^{-i\tau t}\varphi(t)dt.
\]
In (\ref{Eq:HP-ip}), the second equality follows from Lemma A.1.(b) in \cite{balan}. 
It follows from (\ref{Eq:HP-ip}) and (\ref{parseval}) that
\[
\begin{split}
\|\varphi\|_{\HP}&=q_Hc_H\int_{\R}|\tau|^{1-2H}d\tau \int_{\R^d}\int_{\R^d}f(x-y)\F_{0,T}\varphi(\tau,x)
\overline{\F_{0,T} \varphi(\tau,y)}dxdy\\
&=\frac{q_Hc_H }{(2 \pi)^d}\int_{\R}|\tau|^{1-2H}d\tau \int_{\R^d}\F(\F_{0,T}\varphi(\tau,\cdot))(\xi)
\F(\overline{\F_{0,T}\varphi(\tau,\cdot)(\xi)})\mu(d\xi).
\end{split}
\]

Let $B=\{B(\varphi):\,\varphi\in\D\left((0,\,T)\times\R^d\right)\}$ be a centered Gaussian process 
with covariance
\begin{equation}\label{e:iso-0}
\EE[B(\varphi)B(\psi)]=\la\varphi,\psi\ra_{\HP}.
\end{equation}
For any $t>0$ and $A\in\B(\R^d),$ one can define $B_t(A)=B\left(\I_{[0,\,t]\times A}\right)$ as 
the $L^2(\Omega)$-limit of the Cauchy sequence $\{B(\varphi_n)\},$ where $\{\varphi_n\}
\subset \D\left((0,\,T)\times\R^d\right)$ converges to $\I_{[0,\,t]\times A}$ pointwisely.
By a routine limiting argument, one can show that \eqref{e:iso-0} remains valid when $\varphi$ 
and $\psi$ are functions of the form $\I_{[0,\,t]\times A}$ with $t>0$ and $A\in\B(\R^d)$.

Let $\varE$ be the space of all linear combinations of indicator functions $\I_{[0,\,t]\times A},$ 
where $t\in[0,\,T],$ $A\in\B_b(\R^d)$ which is the class of all bounded Borel sets in $\R^d.$

One can extend the definition of $\EE[B(\varphi)B(\psi)]$ to $\varE$ by
linearity. Then we have
\begin{equation}\label{Eq:Iso}
\EE[B(\varphi)B(\psi)]=\la\varphi,\psi\ra_{\HP},\quad\forall\varphi,\,\psi\in\varE,
\end{equation}
i.e., $\varphi\to B(\varphi)$ is an isometry between $\left(\varE,\la\cdot,\cdot\ra_{\HP}\right)$ 
and $\HH^B,$ where $\HH^B$ is the Gaussian space generated by $\left\{B(\varphi),\,\varphi\in \D\left((0,\,T)\times\R^d\right)\right\}.$

Since the space $\HP$ is the completion of $\varE$ with respect to $\la\cdot,\cdot\ra_{\HP},$ 
the isometry (\ref{Eq:Iso}) can be extended to $\HP,$ giving us the stochastic integral 
of $\varphi\in\HP$ with respect to $B$. We denote this stochastic integral by
\[B(\varphi)=\int_0^T\int_{\R^d}\varphi(t,x)B(dtdx).\]

Now we study the existence of solution of (\ref{Eq:she_l}). Before doing this, we prove
some preliminary results first.

We assume that the L\'evy process $X=\{X_t\}$ has a transition density  which is given by
%\begin{equation}\label{Eq:den}
$$
p_t(x)=(2\pi)^{-d}\int_{\R^d}e^{-i\la x,\xi\ra}e^{-t\Psi(\xi)}d\xi=(2\pi)^{-d}\F e^{-t\Psi}(x),\quad\forall t>0.
%\end{equation}
$$
As in \cite[(3.25)]{balan}, we define the solution of the Cauchy problem \eqref{Eq:she_l} as follows. 
A random field $\{u(t, x): (t, x)\in [0, T]\times \R^d\}$ is said to be a solution of
\eqref{Eq:she_l} if for any $\eta\in \D\left((0,\,T)\times\R^d\right),$
%\begin{equation}\label{Eq:Exist}
$$
\int_0^T\int_{\R^d}u(t,x)\eta(t,x)dxdt=\int_0^T\int_{\R^d}\left(\eta*\tilde{p}\right)(t,x)B(dtdx),\quad a.s.,
%\end{equation}
$$
where $\tilde{p_s}(y)=p_s(-y).$

Denote $g_{t,x}(s,y)=p_{t-s}(x-y)\I_{\{s<t\}},\,s,\,t\in(0,\,T),\,x,\,y\in\R^d.$ We first derive conditions 
on the characteristic exponent $\Psi$ such that $\|g_{t,x}\|_{\HP}<\infty,$ which extends \cite[Theorem 3.12]{balan}.

Recall, by noting that
\[
(2\pi)^d\F^{-1} p_{t-s}(\cdot-x)(\xi)
=\EE\left[e^{i\la\xi, X_{t-s}-x\ra}\right]=e^{-i\la\xi, x\ra-(t-s)\Psi(\xi)},
\]
we have
%\begin{equation}\label{Eq:gtx1}
\begin{equation*}
\begin{split}
\|g_{t,x}\|_{\HP}&=q_H\int_0^t\int_0^t|s-r|^{2H-2}drds\int_{\R^d}\int_{\R^d}g_{t,x}(s,y)f(y-z)g_{t,x}(r,z)dydz\\
&=(2\pi)^{2d}q_H\int_0^t\int_0^t|s-r|^{2H-2}drds\int_{\R^d}
\F^{-1} g_{t,x}(s,\cdot)(\xi)\overline{\F^{-1} g_{t,x}(r,\cdot)(\xi)}\mu(d\xi)\\
&=(2\pi)^{2d}q_H\int_0^t\int_0^t|s-r|^{2H-2}drds\int_{\R^d}
\F^{-1} p_{t-s}(\cdot-x)(\xi)\overline{\F^{-1} p_{t-r}(\cdot-x)(\xi)}\mu(d\xi)\\
&=q_H\int_0^t\int_0^t|s-r|^{2H-2}drds\int_{\R^d}e^{-(t-s)\Psi(\xi)-(t-r)\Psi(-\xi)}\mu(d\xi).
\end{split}
%\end{equation}
\end{equation*}
Note that the display above says that $\|g_{t,x}\|_{\HP}$ is independent of $x$.
By Fubini's theorem, we have
%\begin{equation}\label{Eq:gtx2}
$$
\|g_{t,x}\|_{\HP}=q_H\int_{\R^d}\mu(\xi)\int_0^t\int_0^t|s-r|^{2H-2}e^{-(t-s)\Psi(\xi)-(t-r)\Psi(-\xi)}dsdr.
%\end{equation}
$$
We consider the inner integral (in $s$ and $r$) first. Define
%\begin{eqnarray*}
%h_1(s)&:=&\I_{[0,\,t]}(s)e^{-(t-s)\Psi(\xi)},\\
%h_2(r)&:=&\I_{[0,\,t]}(r)e^{-(t-r)\Psi(-\xi)},
%\end{eqnarray*}
$$
h_1(s):=\I_{[0,\,t]}(s)e^{-(t-s)\Psi(\xi)},\quad 
h_2(r):=\I_{[0,\,t]}(r)e^{-(t-r)\Psi(-\xi)}.
$$
%and recall Lemma A.1(b) from \cite{balan} that 
It follows from \cite[Lemma A.1(b)]{balan} that
for $\alpha\in(0,\,1),$ for every $\varphi,\,\psi$ from $L^2(a,\,b)$, we have 
\[\int_a^b\int_a^b\varphi(u)|u-v|^{-(1-\alpha)}\psi(v)dvdu=c_{\frac{\alpha+1}{2}}\int_\R |\tau|^{-\alpha}\F \varphi(\tau)\overline{\F \psi(\tau)}d\tau.\]
%We have, by choosing $\alpha=2H-1,\,\varphi=h_1,\,\psi=h_2,\,a=0,\,b=t,$ 
Thus, by choosing $\alpha=2H-1,\,\varphi=h_1,\,\psi=h_2,\,a=0,\,b=t,$ 
%\begin{equation}\label{Eq:gtx3}
\begin{equation*}
\begin{split}
&\int_0^t\int_0^t|s-r|^{2H-2}e^{-(t-s)\Psi(\xi)-(t-r)\Psi(-\xi)}dsdr\\
&=\int_{\R}\int_{\R}|s-r|^{2H-2}h_1(s)h_2(r)dsdr\\
&=c_H\int_{\R}|\tau|^{1-2H}\F h_1(\tau)\overline{\F h_2(\tau)}d\tau.
\end{split}
%\end{equation}
\end{equation*}
 By using the change of variables $t-s=s'$, we get
\begin{eqnarray*}
%\F h_1(\tau)&=&\int_0^te^{-i\tau s-(t-s)\Psi(\xi)}ds,\\
%&=& e^{-i\tau t}\int_0^te^{i\tau s-s\Psi(\xi)}ds\\
\F h_1(\tau)&=&\int_0^te^{-i\tau s-(t-s)\Psi(\xi)}ds 
=e^{-i\tau t}\int_0^te^{i\tau s-s\Psi(\xi)}ds\\
&=& \frac{-e^{-i\tau t}}{i\tau-\Psi(\xi)}\left(1-e^{i\tau t-t\Psi(\xi)}\right),
\end{eqnarray*}
and similarly,
\[\F h_2(\tau)=\frac{-e^{-i\tau t}}{i\tau-\Psi(-\xi)}\left(1-e^{i\tau t-t\Psi(-\xi)}\right).\]
Consequently, letting $K_H=q_Hc_H,$ we have
\begin{equation}\label{Eq:gtx4}
\begin{split}
&\|g_{t,x}\|_{\HP}\\
&=K_H\int_{\R^d}\mu(\xi)\int_{\R}|\tau|^{1-2H}\frac{1}{i\tau-\Psi(\xi)}\overline{\frac{1}{i\tau-\Psi(-\xi)}}\left(1-e^{i\tau t-t\Psi(\xi)}\right)
\overline{\left(1-e^{i\tau t-t\Psi(-\xi)}\right)}d\tau.
\end{split}
\end{equation}

For simplicity, we assume that $X$ is symmetric, which implies that $\Psi(\xi)=\Psi(-\xi).$ 
In this case, $\Psi(\xi)$ is real-valued, and (\ref{Eq:gtx4}) reduces to
\begin{equation}\label{Eq:gtx5}
\|g_{t,x}\|_{\HP}
=K_H\int_{\R^d}\mu(\xi)\int_{\R}\frac{|\tau|^{1-2H}}{\tau^2+\Psi(\xi)^2}\left|1-e^{i\tau t-t\Psi(\xi)}\right|^2d\tau,
\end{equation}
which allows us to prove the following theorem.

\begin{thm}\label{Thm:gtx}
Assume that $\Le$ in (\ref{Eq:she_l}) is the generator of a symmetric L\'evy process in $\R^d$ with characteristic 
exponent  $\Psi(\xi)$. For any $t>0$ fixed, if
%\begin{equation}\label{Eq:gtx6}
$$
\int_{\R^d}\frac{\mu(d\xi)}{t^{-2H}+\Psi(\xi)^{2H}}<\infty,
%\end{equation}
$$
then $\|g_{t,x}\|_{\HP}<\infty.$
\end{thm}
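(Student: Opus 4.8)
The plan is to start from the representation \eqref{Eq:gtx5} and bound the $\tau$-integral
$$
I(\xi):=\int_{\R}\frac{|\tau|^{1-2H}}{\tau^2+\Psi(\xi)^2}\bigl|1-e^{i\tau t-t\Psi(\xi)}\bigr|^2\,d\tau
$$
uniformly in a way that exhibits the decay in $\Psi(\xi)$. First I would record the elementary estimate $|1-e^{i\tau t-t\Psi(\xi)}|^2\le 4$, which is enough for the region where $|\tau|\Psi(\xi)$ is large, and the complementary bound $|1-e^{i\tau t-t\Psi(\xi)}|^2\le C\min\{1,(t|\tau|)^2+(t\Psi(\xi))^2\}$, which is what one needs near $\tau=0$. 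Because $\Psi$ is real and nonnegative for a symmetric L\'evy process, there is no cancellation to worry about in the denominator $\tau^2+\Psi(\xi)^2$, so the integrand is genuinely positive and the bounds are clean.

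The core computation is to split $I(\xi)$ at $|\tau|\sim t^{-1}$ or at $|\tau|\sim \Psi(\xi)$ (whichever is the right scale) and evaluate the two resulting integrals. For $|\tau|\gtrsim \Psi(\xi)$ one uses $|1-e^{\cdots}|^2\le 4$ and $\tau^2+\Psi(\xi)^2\ge \tau^2$, giving a contribution of order $\int_{|\tau|\gtrsim\Psi(\xi)}|\tau|^{-1-2H}\,d\tau \asymp \Psi(\xi)^{-2H}$ (here $H>\tfrac12$ guarantees the exponent $-1-2H<-1$ so the tail converges, and $H<1$ is not even needed for this piece). For $|\tau|\lesssim \Psi(\xi)$ one uses $|1-e^{\cdots}|^2 \le C\min\{1,(t|\tau|)^2 + t^2\Psi(\xi)^2\}$ together with $\tau^2 + \Psi(\xi)^2 \asymp \Psi(\xi)^2$, and then further split at $|\tau|\sim t^{-1}$: on $|\tau| \le t^{-1}$ one uses the quadratic bound $(t|\tau|)^2$, which after the $|\tau|^{1-2H}$ weight integrates to something of order $t^{2}\cdot t^{-(4-2H)}\Psi(\xi)^{-2} \asymp t^{2H-2}\Psi(\xi)^{-2}$, while on $t^{-1}\le |\tau|\le \Psi(\xi)$ one uses the bound $1$ and integrates $|\tau|^{1-2H}\Psi(\xi)^{-2}$ to get order $\Psi(\xi)^{-2}\cdot(\text{something} \le \Psi(\xi)^{2-2H})$ plus a $t$-dependent remainder. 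Combining, every piece is bounded by $C_t\,(t^{-2H}+\Psi(\xi)^{2H})^{-1}$ after one checks the two regimes $\Psi(\xi)\le t^{-1}$ and $\Psi(\xi)\ge t^{-1}$ separately; in the first regime the dominant term is the constant $C_t$ (consistent with $I(\xi)$ being bounded), and in the second regime the dominant term is $C\,\Psi(\xi)^{-2H}$. Thus $I(\xi)\le C(t)\bigl(t^{-2H}+\Psi(\xi)^{2H}\bigr)^{-1}$, and plugging this into \eqref{Eq:gtx5} gives $\|g_{t,x}\|_{\HP}\le C(t)\int_{\R^d}(t^{-2H}+\Psi(\xi)^{2H})^{-1}\mu(d\xi)<\infty$ by hypothesis.

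The main obstacle I anticipate is organizing the case analysis so that the constant comes out as a function of $t$ alone (blowing up as $t\downarrow0$, which is fine) and so that the exponent $2H$ in the denominator emerges cleanly rather than some weaker power; the delicate point is the interplay of the two scales $t^{-1}$ and $\Psi(\xi)$ in the region $|\tau|\le \Psi(\xi)$, where one must be careful to use the quadratic bound $(t|\tau|)^2$ only up to $|\tau|\sim t^{-1}$ and the trivial bound $1$ beyond it, otherwise the integral near $\tau=0$ is either divergent or gives the wrong power of $\Psi(\xi)$. A secondary technical point is justifying the interchange of the $\mu$-integral and the $\tau$-integral (Tonelli, since everything is nonnegative once $X$ is symmetric) so that the final bound is legitimate; this is routine but should be mentioned.
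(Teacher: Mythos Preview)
Your proposal is correct and follows essentially the same route as the paper: bound the inner $\tau$-integral by splitting into the regimes $t\Psi(\xi)\le 1$ and $t\Psi(\xi)>1$, using the trivial bound $|1-e^{i\tau t-t\Psi(\xi)}|^2\le 4$ in the latter and the quadratic bound near $\tau=0$ in the former. The paper streamlines the bookkeeping by first substituting $\tau=\Psi(\xi)v$, which reduces the $\tau$-integral to $\Psi(\xi)^{-2H}$ times a dimensionless integral depending only on the single parameter $x=t\Psi(\xi)$; the case $x<1$ is then isolated as a separate lemma (Lemma~\ref{Lem:gtx}), whose proof is exactly your split at $|v|\sim x^{-1}$ (i.e., $|\tau|\sim t^{-1}$). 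This substitution avoids the nested splitting you describe in the region $|\tau|\lesssim\Psi(\xi)$---in fact, when $t\Psi(\xi)\ge 1$ the trivial bound alone already gives $I(\xi)\le C\Psi(\xi)^{-2H}$ with no further subdivision needed, so your secondary split there is harmless but superfluous. One small sharpening: in the regime $t\Psi(\xi)<1$ and $|\tau|\le t^{-1}$, the cleanest move is to observe that $(t\tau)^2+(t\Psi(\xi))^2=t^2(\tau^2+\Psi(\xi)^2)$ cancels the denominator exactly, leaving just $Ct^2\int_{|\tau|\le t^{-1}}|\tau|^{1-2H}d\tau\asymp t^{2H}$; this replaces your separate treatment of the $(t|\tau|)^2$ and $(t\Psi(\xi))^2$ pieces.
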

Before proving Theorem \ref{Thm:gtx}, we prove the following lemma first.

\begin{lemma}\label{Lem:gtx}
We have that
\begin{equation}\label{Eq:gtx7}
\int_{\R}|v|^{1-2H}\frac{1}{1+v^2}\left|1-e^{(iv-1)x}\right|^2dv\le Kx^{2H},\quad \forall x\in (0,\, 1),
\end{equation}
where $K$ is a positive finite constant.
\end{lemma}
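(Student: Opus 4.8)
The plan is to split the integral in \eqref{Eq:gtx7} into two regions, $|v|\le 1/x$ and $|v|>1/x$, and to estimate each piece separately using the elementary bound $|1-e^{(iv-1)x}|\le |(iv-1)x| = x\sqrt{1+v^2}$ together with the trivial bound $|1-e^{(iv-1)x}|\le 2$. On the inner region $|v|\le 1/x$, I would use $|1-e^{(iv-1)x}|^2 \le x^2(1+v^2)$, so the integrand is bounded by $|v|^{1-2H}x^2$, and since $1-2H\in(-1,0)$ the integral $\int_{|v|\le 1/x}|v|^{1-2H}\,dv$ converges at the origin and equals a constant times $(1/x)^{2-2H}$; multiplying by $x^2$ gives a bound of order $x^{2H}$. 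On the outer region $|v|>1/x$, I would use $|1-e^{(iv-1)x}|^2\le 4$ and $\frac{1}{1+v^2}\le \frac{1}{v^2}$, so the integrand is bounded by $4|v|^{-1-2H}$, and $\int_{|v|>1/x}|v|^{-1-2H}\,dv$ converges at infinity (since $1+2H>1$) and equals a constant times $(1/x)^{-2H}=x^{2H}$. Adding the two contributions yields the desired bound $Kx^{2H}$ with $K$ depending only on $H$, uniformly for $x\in(0,1)$ (indeed for all $x>0$).

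The key steps, in order, are: (i) record the two elementary pointwise bounds on $|1-e^{(iv-1)x}|$; (ii) split the domain at the crossover scale $|v|=1/x$, which is the natural scale where $x\sqrt{1+v^2}$ passes through a constant; (iii) carry out the inner estimate, checking that the exponent $1-2H>-1$ makes the integral near $0$ finite; (iv) carry out the outer estimate, checking that the exponent $-1-2H<-1$ makes the integral near $\infty$ finite; (v) combine. Each integral in (iii) and (iv) is an explicit power, so the constants are computed directly; keeping track of the $x$-dependence is the whole point, and in both pieces it comes out to exactly $x^{2H}$, which is reassuring and confirms the choice of split point.

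I do not expect a serious obstacle here; this is a routine scaling estimate. The only thing to be careful about is that the bound must be \emph{uniform} in $x$ over $(0,1)$ — which is automatic from the computation, since the right-hand side is a pure constant times $x^{2H}$ with no hidden $x$-dependence — and that the two exponents $1-2H$ and $-1-2H$ fall on the correct sides of $-1$, which is exactly where the hypothesis $H\in(1/2,1)$ is used (indeed only $H<1$ is needed for the inner piece and $H>0$ for the outer). One could alternatively prove the lemma by the substitution $v=w/x$, which turns the left-hand side into $x^{2H}\int_{\R}|w|^{1-2H}\frac{1}{1+(w/x)^2}|1-e^{(i-1)w}|^2\frac{dw}{1}$ after absorbing powers of $x$; bounding $\frac{1}{1+(w/x)^2}\le 1$ and controlling $|1-e^{(i-1)w}|$ by $\min(|w|,2)$ immediately isolates the factor $x^{2H}$ and reduces everything to the single convergent integral $\int_{\R}|w|^{1-2H}\min(|w|^2,4)\,dw<\infty$. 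I would present whichever of these is shorter, but the split-domain argument is the most transparent.
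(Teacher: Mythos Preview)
Your proposal is correct and follows essentially the same route as the paper: both split the integral at $|v|=1/x$ and handle the outer piece by the trivial bound $|1-e^{(iv-1)x}|^2\le 4$ together with $1/(1+v^2)\le v^{-2}$. For the inner piece the paper expands $|1-e^{(iv-1)x}|^2=(1-e^{-x})^2+2e^{-x}(1-\cos(vx))$ and uses $1-\cos(vx)\le (vx)^2$, whereas your use of $|1-e^{z}|\le |z|$ for $\mathrm{Re}\,z\le 0$ gives the same $x^2(1+v^2)$ control in one step; this is a cosmetic simplification, not a different argument.
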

\begin{proof}
Notice that the integral on the left hand side of (\ref{Eq:gtx7}) can be written as
%\begin{equation}\label{Eq:gtx8}
\begin{equation*}
\begin{split}
&\int_{\R}|v|^{1-2H}\frac{1-2e^{-x}\cos(vx)+e^{-2x}}{1+v^2}dv\\
&=\int_{|v|\le x^{-1}}|v|^{1-2H}\frac{1-2e^{-x}\cos(vx)+e^{-2x}}{1+v^2}dv\\
&\qquad\qquad\qquad+\int_{|v|>x^{-1}}|v|^{1-2H}\frac{1-2e^{-x}\cos(vx)+e^{-2x}}{1+v^2}dv\\
&:=I_1+I_2.
\end{split}
%\end{equation}
\end{equation*}
For $I_1,$ we use the inequality $\cos(vx)\ge1-(vx)^2$ for $|vx|\le 1$ to get that
\begin{equation}\label{Eq:gtx9}
\begin{split}
I_1&\le \int_{|v|\le x^{-1}}|v|^{1-2H}\frac{1-2e^{-x}(1-v^2x^2)+e^{-2x}}{1+v^2}dv\\
&\le \int_{|v|\le x^{-1}}\frac{|v|^{1-2H}}{1+v^2}(1-e^{-x})^2dv
+2\int_{|v|\le x^{-1}}|v|^{1-2H}x^2dv\\
&\le Kx^2+Kx^{2H}\le Kx^{2H}.
\end{split}
\end{equation}
For $I_2,$ we have
\begin{equation}\label{Eq:gtx10}
I_2\le 4\int_{|v|>x^{-1}}\frac{|v|^{1-2H}}{1+v^2}dv\le 8\int_{x^{-1}}^\infty v^{-(1+2H)}dv=Kx^{2H}.
\end{equation}
Combining (\ref{Eq:gtx9}) and (\ref{Eq:gtx10}), we arrive at the conclusion of Lemma \ref{Lem:gtx}.
\end{proof}

Now we are ready to prove Theorem \ref{Thm:gtx}.

\begin{proof}
We consider the inner integral with respect to $\tau$ in (\ref{Eq:gtx5}). Let $\tau=\Psi(\xi)v,$ we have
\begin{equation}\label{Eq:gtx11}
%\begin{split}
%& \int_{\R}\frac{|\tau|^{1-2H}}{\tau^2+\Psi(\xi)^2}\left|1-e^{i\tau t-t\Psi(\xi)}\right|^2d\tau\\
%&=\frac{1}{\Psi(\xi)^{2H}}\int_{\R}\frac{|v|^{1-2H}}{1+v^2}\left|1-e^{(iv-1)t\Psi(\xi)}\right|^2dv.
%\end{split}
\int_{\R}\frac{|\tau|^{1-2H}}{\tau^2+\Psi(\xi)^2}\left|1-e^{i\tau t-t\Psi(\xi)}\right|^2d\tau
=\frac{1}{\Psi(\xi)^{2H}}\int_{\R}\frac{|v|^{1-2H}}{1+v^2}\left|1-e^{(iv-1)t\Psi(\xi)}\right
|^2dv.
\end{equation}
Clearly, for all $\xi\in\R^d$,
\begin{equation}\label{Eq:gtx12}
\int_{\R}\frac{|v|^{1-2H}}{1+v^2}\left|1-e^{(iv-1)t\Psi(\xi)}\right|^2dv\le 8\int_0^\infty\frac{v^{1-2H}}{1+v^2}\le K<\infty
\end{equation}
for some positive constant $K$ independent of $\xi.$
To bound the integral on the right hand side of (\ref{Eq:gtx11}), we consider to two cases
separately:

\begin{description}
\item Case 1. If $t\Psi(\xi)>1,$ then by (\ref{Eq:gtx12}), we have
\begin{equation}\label{Eq:gtx13}
\frac{1}{\Psi(\xi)^{2H}}\int_{\R}\frac{|v|^{1-2H}}{1+v^2}\left|1-e^{(iv-1)t\Psi(\xi)}\right|^2dv\le \frac{K}{\Psi(\xi)^{2H}}.
\end{equation}
\item Case 2. If $t\Psi(\xi)\le 1,$ then by Lemma \ref{Lem:gtx}, we have
\begin{equation}\label{Eq:gtx14}
\frac{1}{\Psi(\xi)^{2H}}\int_{\R}\frac{|v|^{1-2H}}{1+v^2}\left|1-e^{(iv-1)t\Psi(\xi)}\right|^2dv\le Kt^{2H}.
\end{equation}
\end{description}
%Combining (\ref{Eq:gtx12}) and (\ref{Eq:gtx13}), we get
Combining (\ref{Eq:gtx12}), (\ref{Eq:gtx13}) and (\ref{Eq:gtx14}), we get
%\begin{equation}\label{Eq:gtx15}
\begin{equation*}
\begin{split}
&\frac{1}{\Psi(\xi)^{2H}}\int_{\R}\frac{|v|^{1-2H}}{1+v^2}\left|1-e^{(iv-1)t\Psi(\xi)}\right|^2dv\\
&\le K\left(\frac{1}{\Psi(\xi)^{2H}}\I_{\{t\Psi(\xi)>1\}}+t^{2H}\I_{\{t\Psi(\xi)\le 1\}}\right)\\
&\le\frac{2Kt^{2H}}{1+(t\Psi(\xi))^{2H}},
\end{split}
%\end{equation}
\end{equation*}
which proves Theorem \ref{Thm:gtx}.
\end{proof}

Theorem \ref{Thm:gtx} provides a sufficient condition for $\|g_{t,x}\|_{\HP}<\infty.$ Actually, this 
condition is also necessary provided $\Psi(\xi)$ satisfies an extra growth condition, i.e., the 
lower index of the L\'evy exponent $\beta^{''}>0,$ where $\beta^{''}$ is defined by  [cf. Blumenthal and Getoor (1961)]
\[\beta^{''}=\sup\{\gamma\ge0:\,\lim_{\|\xi\|\to\infty}\|\xi\|^{-\gamma}\rm{Re}\Psi(\xi)=\infty\}.\]

\begin{thm}\label{Thm:gtx1}
%Assume that the condition of Theorem \ref{Thm:gtx} holds. 
Assume that $\Le$ in (\ref{Eq:she_l}) is the generator of a symmetric L\'evy process in $\R^d$ with characteristic exponent  $\Psi(\xi)$.
If $\beta^{''}>0$ and
\begin{equation}\label{Eq:gtx16}
\int_{\R^d}\frac{\mu(d\xi)}{t^{-2H}+\Psi(\xi)^{2H}}=\infty,
\end{equation}
then $\|g_{t,x}\|_{\HP}=\infty.$
\end{thm}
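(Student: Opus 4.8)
The strategy is to complement Theorem~\ref{Thm:gtx} with a matching \emph{lower} bound on the inner integral defining $\|g_{t,x}\|_{\HP}$ and then integrate it against $\mu$. Recall from the computation preceding Theorem~\ref{Thm:gtx} that, writing $J_t(\xi):=\int_0^t\int_0^t|s-r|^{2H-2}e^{-(2t-s-r)\Psi(\xi)}\,ds\,dr$ and using that $X$ is symmetric,
\[
\|g_{t,x}\|_{\HP}=q_H\int_{\R^d}J_t(\xi)\,\mu(d\xi).
\]
Since the integrand is nonnegative, Tonelli's theorem lets us argue termwise in $\xi$, so it suffices to produce a constant $c=c(H,t)>0$ with $q_H\,J_t(\xi)\ge c\,\big(t^{-2H}+\Psi(\xi)^{2H}\big)^{-1}$ for every $\xi\in\R^d$; granting this, \eqref{Eq:gtx16} immediately yields $\|g_{t,x}\|_{\HP}\ge c\int_{\R^d}\big(t^{-2H}+\Psi(\xi)^{2H}\big)^{-1}\mu(d\xi)=\infty$.

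To prove the pointwise bound, rescale $s=t\sigma$, $r=t\rho$ to get $J_t(\xi)=t^{2H}\int_0^1\int_0^1|\sigma-\rho|^{2H-2}e^{-t\Psi(\xi)(2-\sigma-\rho)}\,d\sigma\,d\rho$, and note that $\int_0^1\int_0^1|\sigma-\rho|^{2H-2}\,d\sigma\,d\rho=1/q_H<\infty$ because $H>\tfrac12$. If $t\Psi(\xi)\le1$ (in particular if $\Psi(\xi)=0$), then $2-\sigma-\rho\le2$ gives $e^{-t\Psi(\xi)(2-\sigma-\rho)}\ge e^{-2t\Psi(\xi)}\ge e^{-2}$ on $[0,1]^2$, hence $q_H J_t(\xi)\ge e^{-2}t^{2H}$. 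If $t\Psi(\xi)>1$, discard everything outside the corner square $\sigma,\rho\in[1-(t\Psi(\xi))^{-1},1]$, on which $2-\sigma-\rho\le 2(t\Psi(\xi))^{-1}$ and hence $e^{-t\Psi(\xi)(2-\sigma-\rho)}\ge e^{-2}$; rescaling that square by $(t\Psi(\xi))^{-1}$ and using the value of the integral above gives $q_H J_t(\xi)\ge e^{-2}t^{2H}(t\Psi(\xi))^{-2H}=e^{-2}\Psi(\xi)^{-2H}$. Combining the two cases, $q_H J_t(\xi)\ge e^{-2}\min\{t^{2H},\Psi(\xi)^{-2H}\}=e^{-2}\big/\max\{t^{-2H},\Psi(\xi)^{2H}\}\ge e^{-2}\big/\big(t^{-2H}+\Psi(\xi)^{2H}\big)$, i.e. the desired bound holds with $c=e^{-2}$, and the theorem follows.

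I do not expect a genuine obstacle; the only point needing care is the ``corner localization'' in the case $t\Psi(\xi)>1$, namely the observation that the bulk of $J_t(\xi)$ comes from $s,r$ lying within $O(1/\Psi(\xi))$ of $t$, which is exactly the mechanism producing the sharp rate $\Psi(\xi)^{-2H}$. An alternative but messier route would lower-bound the oscillatory $\tau$-integral in \eqref{Eq:gtx5} directly, using that $\cos$ is negative on a set of asymptotic density $\tfrac12$ while $|v|^{1-2H}/(1+v^2)\asymp|v|^{-1-2H}$ for large $|v|$; the rescaling argument above avoids this. Note that the hypothesis $\beta''>0$ is not strictly needed for the estimate above (which holds for all $\xi$, including where $\Psi(\xi)=0$): it is merely convenient, since it forces $\Psi(\xi)\to\infty$ as $\|\xi\|\to\infty$, so that $\{\Psi=0\}$ is bounded, carries finite $\mu$-mass, and may be discarded when localizing the divergence in \eqref{Eq:gtx16} to $\{\Psi>0\}$.
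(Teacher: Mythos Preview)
Your argument is correct, but it follows a genuinely different route from the paper's. The paper works on the frequency side: it passes to the representation \eqref{Eq:gtx5}--\eqref{Eq:gtx11} and uses $\beta''>0$ to guarantee that $\Psi(\xi)\to\infty$ as $|\xi|\to\infty$, so that for $|\xi|$ large one has $|1-e^{(iv-1)t\Psi(\xi)}|\ge\frac12$ uniformly in $v$; this gives $\int_\R\frac{|v|^{1-2H}}{1+v^2}|1-e^{(iv-1)t\Psi(\xi)}|^2\,dv\ge c>0$, hence the inner integral in \eqref{Eq:gtx5} is $\ge c\,\Psi(\xi)^{-2H}$ for large $|\xi|$, and integrating over $\{|\xi|\ge K\}$ yields divergence. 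By contrast, you stay entirely on the time side, working with $J_t(\xi)=\int_0^t\int_0^t|s-r|^{2H-2}e^{-(2t-s-r)\Psi(\xi)}\,ds\,dr$ and obtaining the sharp rate by rescaling and localizing to the corner $[t-\Psi(\xi)^{-1},t]^2$.

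Your approach is both more elementary (no Fourier inversion in the time variable, no oscillatory integrals) and strictly stronger: your pointwise bound $q_HJ_t(\xi)\ge e^{-2}\big(t^{-2H}+\Psi(\xi)^{2H}\big)^{-1}$ holds for \emph{every} $\xi\in\R^d$, so the hypothesis $\beta''>0$ is superfluous in your argument, as you correctly observe. The paper's route, on the other hand, genuinely uses $\beta''>0$ to reach the regime $t\Psi(\xi)\gg1$ where the oscillatory factor is trivially bounded below; it has the advantage of making direct contact with the spectral formula \eqref{Eq:gtx5}, which is the form used elsewhere in Section~\ref{Sec:Reg}.
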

\begin{proof}
%In this situation, there exists $K>0$ such that when $|\xi|\ge K$,
It follows from the assumption $\beta^{''}>0$ that there exists $K>0$ such that when $|\xi|\ge K$,
we have
\[\left|1-e^{(iv-1)t\Psi(\xi)}\right|\ge \frac12.\]
Therefore, by  (\ref{Eq:gtx11}),
%\begin{equation}\label{Eq:gtx17}
\begin{equation*}
\begin{split}
\|g_{t,x}\|_{\HP}
&=K_H\int_{\R^d}\mu(\xi)\int_{\R}\frac{|\tau|^{1-2H}}{\tau^2+\Psi(\xi)^2}\left|1-e^{i\tau t-t\Psi(\xi)}\right|^2d\tau\\
&\ge K_1\int_{|\xi|\ge K}\frac{\mu(d\xi)}{\Psi(\xi)^{2H}}\int_\R\frac{v^{1-2H}}{1+v^2}dv=\infty,
\end{split}
%\end{equation}
\end{equation*}
provided (\ref{Eq:gtx16}) holds.
\end{proof}

\begin{exam}\label{Ex:Sd_Riesz}
If $X$ is a symmetric L\'evy process in $\R^d$ with
\begin{equation}\label{e:stablelike}
\Psi(\xi)\asymp |\xi|^{\alpha}\quad{\rm for\,\, all }\,\, \xi\in\R^d \mbox{ with } |\xi|\ge 1,
\end{equation}
then for $\mu(d\xi)=|\xi|^{-\beta}d\xi$, $\|g_{t,x}\|_{\HP}<\infty$ is equivalent to
$$
\int_{\R^d}\frac{\mu(d\xi)}{1+\Psi(\xi)^{2H}}\asymp \int_{\R^d}\frac{d\xi}{|\xi|^\beta\left(1+|\xi|^{2\alpha H}\right)}<\infty,
$$
which in turn is equivalent to
$$
d-2\alpha H<\beta<d,\quad{\rm or}\quad H>\frac{d-\beta}{2\alpha}>0.
$$
Some concrete examples of symmetric L\'evy processes satisfying condition \eqref{e:stablelike}
are as follows:
\begin{description}
\item (i) Isotropic $\alpha$-stable process, for which $\Psi(\xi)=|\xi|^\alpha$;

\item (ii) relativistic $\alpha$-stable process with mass $m>0$ (\cite{CS, Ryznar}), for which
%$\Psi(\xi)\asymp|\xi|^\alpha$;
$\Psi(\xi)\asymp|\xi|^\alpha$ for all $\xi\in\R^d$ with $|\xi|\ge 1$;

\item (iii) the independent sum of an isotropic $\alpha$-sable process and an isotropic 
$\gamma$-stable process with $\gamma<\alpha$, for which $\Psi(\xi)=|\xi|^\alpha+|\xi|^\gamma$;

\item (iv) symmetric $\alpha$-stable process with L\'evy density comparable to that of the isotropic
$\alpha$-stable process;

\item (v) truncated $\alpha$-stable process (\cite{KS}), for which
$$
\Psi(\xi)=c\int_{|y|<1}\frac{1-\cos\langle \xi, y\rangle}{|y|^{d+\alpha}}dy
$$
for some constant $c>0$.
\end{description}
\end{exam}

The following result is an extension of \cite[Theorem 3.15]{balan}.

\begin{thm}\label{Thm:Exist}
Under the condition of Theorem \ref{Thm:gtx}, \eqref{Eq:she_l} has a solution
$\{u(t,x),\,(t,x)\in[0,\,T]\times\R^d\}$ and for all $(t,x)\in[0,\,T]\times\R^d$,
%\begin{equation}\label{Eq:Exist1}
$$
u(t,x)=\int_0^t\int_{\R^d}g_{t,x}(s,y)B(dsdy).
%\end{equation}
$$
\end{thm}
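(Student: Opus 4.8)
The plan is to use the standard Fourier-analytic / Itô–Walsh isometry framework set up earlier: having verified in Theorem \ref{Thm:gtx} that $g_{t,x}\in\HP$, the stochastic integral $u(t,x):=\int_0^t\int_{\R^d}g_{t,x}(s,y)\,B(ds\,dy)=B(g_{t,x})$ is well defined as an element of the Gaussian space $\HH^B$, and I would then check that this random field satisfies the weak (distributional) formulation of \eqref{Eq:she_l}. So the first step is to fix $\eta\in\D((0,T)\times\R^d)$ and compute $\int_0^T\int_{\R^d} u(t,x)\eta(t,x)\,dx\,dt$. Since $u(t,x)=B(g_{t,x})$ and $B$ is a linear isometry on $\HP$, I expect a stochastic Fubini argument to give
\[
\int_0^T\int_{\R^d} u(t,x)\eta(t,x)\,dx\,dt
= B\Bigl(\,\int_0^T\int_{\R^d} g_{t,x}(\cdot,\cdot)\,\eta(t,x)\,dx\,dt\,\Bigr),
\]
and the inner deterministic integral $\int_0^T\int_{\R^d} g_{t,x}(s,y)\eta(t,x)\,dx\,dt = \int_s^T\int_{\R^d} p_{t-s}(x-y)\eta(t,x)\,dx\,dt$ should, after the change of variables $t'=t-s$, be recognized as $(\eta*\tilde p)(s,y)$, where $\tilde p_r(z)=p_r(-z)$; this matches the right-hand side of the definition of a solution given just before the theorem (following \cite[(3.25)]{balan}).

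The second step is to justify the stochastic Fubini exchange rigorously. I would approximate $\eta$ by simple functions, or equivalently approximate the double integral $\int\int u(t,x)\eta(t,x)\,dx\,dt$ by Riemann sums $\sum_j u(t_j,x_j)\eta(t_j,x_j)\Delta_j$; by linearity of $B(\cdot)$ each such sum equals $B(\sum_j g_{t_j,x_j}\eta(t_j,x_j)\Delta_j)$, and then I need $L^2(\Omega)$-convergence, which by the isometry \eqref{Eq:Iso} reduces to $\HP$-convergence of $\sum_j g_{t_j,x_j}\eta(t_j,x_j)\Delta_j$ to $\int\int g_{t,x}\,\eta(t,x)\,dx\,dt$. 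This convergence I would control using the explicit spectral representation of the $\HP$-norm in \eqref{Eq:HP-ip}, i.e.\ writing everything via $\F_{0,T}$ and $\mu$, together with the domination afforded by the finiteness $\int_{\R^d}\mu(d\xi)/(t^{-2H}+\Psi(\xi)^{2H})<\infty$ from Theorem \ref{Thm:gtx} (uniformly for $t\le T$, since the bound is monotone in $t$ and one can replace $t$ by $T$). The compact support and smoothness of $\eta$ make all the relevant integrands bounded and nicely behaved, so dominated convergence in the spectral variables $(\tau,\xi)$ closes this step.

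The third, essentially bookkeeping, step is to note that the resulting random field $\{u(t,x)\}$ is centered Gaussian (being in $\HH^B$) with covariance
\[
\EE\bigl[u(t,x)u(s,y)\bigr]=\la g_{t,x},\,g_{s,y}\ra_{\HP},
\]
finite by Cauchy–Schwarz and Theorem \ref{Thm:gtx}; this gives a bona fide second-order random field on $[0,T]\times\R^d$, and one may take a jointly measurable version. I would remark that this extends \cite[Theorem 3.15]{balan}, which is the case $\Le=\Delta$, $\Psi(\xi)=|\xi|^2$. The main obstacle is the stochastic Fubini / limiting argument in the second step: one must be careful that the Riemann-sum approximants to $g_{t,x}\eta(t,x)$ genuinely converge \emph{in the $\HP$-norm} — not merely pointwise — and this is where the spectral-domain estimate and the integrability hypothesis of Theorem \ref{Thm:gtx} do the real work; everything else is routine given the isometry already established in Section \ref{Sec:Exi}.
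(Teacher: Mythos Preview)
Your proposal is correct and follows exactly the standard mild-solution / stochastic-Fubini argument that the paper has in mind: the paper's own proof consists of the single sentence ``The argument is the same as that of \cite[Theorem~2.10]{balan}. We omit the details here.'' Your three steps are precisely a sketch of that referenced argument, so there is nothing further to compare.
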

\begin{proof}
The argument is the same as that of \cite[Theorem 2.10]{balan}. We omit the details here.
\end{proof}

\section{Sharp regularity of the solution process in time and space}\label{Sec:Reg}

%In this section, we study regularities of the solution $u(t,x)$ of (\ref{Eq:she_l}) as a Gaussian 
%random field  in variables $(t,x)$.
Throughout this section, we weil assume that the conditions of Theorem \ref{Thm:gtx} hold. 
We study regularities of the solution $u(t,x)$ of (\ref{Eq:she_l}) as a Gaussian 
random field  in variables $(t,x)$.
%Similarly to the random string process in Mueller and Tribe \cite{MT02} 
Simimar to the case of the random string process in Mueller and Tribe \cite{MT02} 
(see also \cite{AX17,tudor}), we 
consider the Gaussian random fields $\{U(t,x),t\ge 0,\,\,x\in\R^d\}$ and $\{Y(t,x),t\ge 0,\,\,x\in\R^d\}$ defined, 
respectively, by
%\begin{equation}\label{Eq:U}
\begin{equation*}
\begin{split}
U(t,x) &= \int_{-\infty}^0\int_{\R^d}\left(p_{t-u}(x-y)-p_{-u}(x-y)\right)B(du,dy)\\
&\qquad+\int_0^t\int_{\R^d}p_{t-u}(x-y)B(du,dy)\\
&= \int_{\R}\int_{\R^d}\left(p_{(t-u)_+}(x-y)-p_{(-u)_+}(x-y)\right)B(du,dy),
\end{split}
%\end{equation}
\end{equation*}
and
%\begin{equation}\label{Eq:Y}
$$
Y(t,x) = \int_{-\infty}^0\int_{\R^d}\left(p_{t-u}(x-y)-p_{-u}(x-y)\right)B(du,dy),
%\end{equation}
$$
where $a_+=\max\{a,\,0\}$, thanks to the fact that $p_s(z)=0$ whenever $s<0.$

Clearly, $u(t,x)=U(t,x)-Y(t,x)$ for any $t\ge0$ and $x\in\R^d.$ In this section, we will, as in Tudor and Xiao (2017),
use this decomposition to obtain the exact uniform and local moduli of continuity of the solution $\{u(t,x),\,t\ge0\,\,
x\in\R^d\}.$ We would like to point out that, unlike in Tudor and Xiao (2017), where they studied the partial sample path 
%regularities of solution $u$ in time variable $t$ (when $x$ is fixed) 
regularities of solution $u$ in time variable $t$ (with $x$ fixed) 
%and in space variable $x$ (when $t$ is fixed), 
and in space variable $x$ (with $t$  fixed), 
we provide the corresponding results in time and space variables  $(t,x)$ simultaneously here. The key ingredient 
in our derivation is the strong local nondeterminism of $\{U(t,x),t\ge 0,\,\,x\in\R^d\}.$

We work on $\{Y(t,x)\}$ first. Denote $H_1=H-\frac{d-\beta}{\alpha}$ and $H_2=\alpha H_1.$

\begin{thm}\label{thm:Y}
Let $(t,x)\in I:=[a,\,b]\times [-M,\,M]^d$, where $[a,\,b]\subset[0,\,\infty)$ and $M>0$ 
%is fixed
are fixed.
\begin{description}
\item (i) If $a>0,$ then there is a modification of $\{Y(t,x)\}$ such that its sample function  
is almost surely continuously (partially) differentiable on $[a,\,b]\times [-M,\,M]^d.$
\item (ii) There is a finite positive constant $c$ such that
%\begin{equation}\label{Eq:Y1}
$$
\limsup_{|\varepsilon|\to0^+}\frac{\sup_{(t,x)\in I, (s,y)\in[0,\,\varepsilon]}|Y(t+s,x+y)-Y(t,x)|}
{\sqrt{\varphi(\varepsilon)\log(1+\varphi(\varepsilon)^{-1})}}\le c,
%\end{equation}
$$
where $\varphi(\varepsilon)=\varepsilon_1^{2H_1}+\sum_{j=2}^{d+1}\sigma(\varepsilon_j)$ for all 
$\varepsilon:=(\varepsilon_1,\,\varepsilon_2,\ldots,\varepsilon_{d+1})$ with $\sigma:\,\R_+\to\R_+$ defined by
\[\sigma(r)=\left\{\begin{array}{ccc}
           r^{2H_2} & \mbox{if }& 0<H_2<1, \\
           r^{2H_2}|\log r| & \mbox{if }& H_2=1,\\
           r^2 & \mbox{if }& H_2>1.
         \end{array} \right.
\]
\end{description}
\end{thm}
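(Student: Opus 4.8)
\textbf{Proof proposal for Theorem \ref{thm:Y}.}

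The plan is to treat $\{Y(t,x)\}$ as a Gaussian random field and estimate its incremental variance and, more generally, the $L^2(\Omega)$ distance between $Y(t+s,x+y)-Y(t,x)$ and the corresponding difference at a nearby parameter point. Using the Fourier-analytic representation of $\|\cdot\|_{\HP}$ established in Section \ref{Sec:Exi}, together with the assumption $\Psi(\xi)\asymp|\xi|^\alpha$ at infinity (Example \ref{Ex:Sd_Riesz}) and $\mu(d\xi)=|\xi|^{-\beta}d\xi$, I would first write
\[
\EE\big[(Y(t,x)-Y(t',x'))^2\big]
= K_H\int_{\R^d}\mu(d\xi)\int_\R \frac{|\tau|^{1-2H}}{\tau^2+\Psi(\xi)^2}\,
\Big|\big(e^{(i\tau-\Psi(\xi))\,?}-\cdots\big)\Big|^2 d\tau,
\]
the point being that, since both time arguments in $Y$ are negative, the relevant kernel is the \emph{smooth} part $p_{t-u}(x-y)-p_{-u}(x-y)$ with $u<0$, so the integrand is exponentially damped for large $\xi$ and the whole field is much more regular than $\{U(t,x)\}$. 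The first step is therefore a clean computation of this variance in terms of $\tau,\xi$ using Lemma A.1(b) of \cite{balan} exactly as in the derivation of \eqref{Eq:gtx4}, followed by bounding the $\tau$-integral by elementary means (it is uniformly bounded in $\xi$ because of the $1/(\tau^2+\Psi(\xi)^2)$ factor and $H\in(1/2,1)$).

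For part (i), when $a>0$ the time arguments stay bounded away from $0$, hence $u\le -a<0$ is bounded away from $0$, and one can differentiate $p_{t-u}(x-y)-p_{-u}(x-y)$ in $t$ and in each coordinate of $x$ under the integral sign; the resulting kernels still lie in $\HP$ because each spatial or temporal derivative brings down a polynomial factor in $\xi$ that is dominated by the exponential decay $e^{c u \Psi(\xi)}$ with $u$ bounded away from $0$. Concretely I would show $\partial_t g$ and $\partial_{x_j} g$ (with $g$ the kernel defining $Y$) have finite $\HP$-norm uniformly on $I$, deduce mean-square differentiability, then upgrade to a.s.\ continuous differentiability of a modification via the Kolmogorov continuity theorem applied to the derivative fields (their increments are again controlled by $\HP$-norm computations of the same type). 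For part (ii), the goal is the uniform modulus of continuity over the compact box $I$: I would establish the two-sided-type bound
\[
\EE\big[(Y(t+s,x+y)-Y(t,x))^2\big]\le C\,\varphi(\varepsilon)
\qquad\text{for }(s,y)\in[0,\varepsilon],\ (t,x)\in I,
\]
where $\varphi(\varepsilon)=\varepsilon_1^{2H_1}+\sum_{j=2}^{d+1}\sigma(\varepsilon_j)$; the temporal term produces the $\varepsilon_1^{2H_1}$ contribution (via a standard fractional-integral estimate of the kind in Lemma \ref{Lem:gtx}, now with the extra smoothing from $u<0$ improving $H$ to $H_1$), while the spatial increments $p_{\cdot}(x+y-\cdot)-p_{\cdot}(x-\cdot)$ contribute $\sum_j\sigma(\varepsilon_j)$, with the three regimes for $\sigma$ arising from whether $H_2=\alpha H_1$ is $<1$, $=1$, or $>1$ (the $|\log r|$ at $H_2=1$ and the saturation at $r^2$ for $H_2>1$ are exactly the usual phenomena for such colored-noise spatial covariances). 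Once this variance bound holds, the $\limsup$ statement follows from the general Gaussian upper-bound for the uniform modulus of continuity (a Dudley/Fernique-type entropy estimate, or Lemma 2.1 in \cite{tudor}/the classical result in \cite{MT02}), since $\varphi$ plays the role of the square of the canonical metric and the exponent $\sqrt{\varphi\log(1+\varphi^{-1})}$ is precisely what that theorem delivers.

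The main obstacle will be the spatial-increment estimate leading to the piecewise definition of $\sigma$: one must carefully split the $\xi$-integral into $|\xi|\le |\varepsilon_j|^{-1}$ and $|\xi|>|\varepsilon_j|^{-1}$, use $|e^{i\xi_j y_j}-1|\le \min(2,|\xi_j y_j|)$, and track how the damping $e^{cu\Psi(\xi)}$ (integrated against $du$ over $(-\infty,0)$) interacts with the power $|\xi|^{-\beta}$; the borderline case $H_2=1$ requires pinning down the logarithmic correction rather than just an order-of-magnitude bound, and keeping all constants uniform over the compact box $I$ (in particular uniform in $t\in[a,b]$, including the degenerate reasoning near $a=0$ for part (ii) where $a$ may be $0$) takes some care. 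A secondary technical point is justifying all the Fubini interchanges and the differentiation under the integral sign in part (i), but given $\beta^{''}>0$ and the exponential decay these are routine.
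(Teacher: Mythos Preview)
The paper's own ``proof'' of this theorem is just a one-line reference to Theorems~4.8 and~4.9 of Xue and Xiao (2011), with all details omitted. Your strategy --- compute variance bounds in the spectral representation and then invoke Kolmogorov/Dudley-type regularity theory for Gaussian fields --- is exactly the machinery underlying those cited results, so in spirit your approach and the paper's coincide; you are simply unpacking what the citation would do.

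That said, there is a genuine error in your reasoning for part~(i). You write that when $a>0$ ``the time arguments stay bounded away from $0$, hence $u\le -a<0$ is bounded away from $0$.'' This is false: the integration variable $u$ in the definition of $Y$ ranges over all of $(-\infty,0)$, regardless of $a$. What is bounded away from $0$ is $t$, not $u$. The correct mechanism is this: writing the spatial Fourier transform of the kernel of $Y$, one gets
\[
e^{-i\langle x,\xi\rangle}\,e^{u\Psi(\xi)}\big(e^{-t\Psi(\xi)}-1\big)\,\I_{\{u<0\}},
\]
so after the $u$-transform the $(\tau,\xi)$-integrand carries the factor $|e^{-t\Psi(\xi)}-1|^2/(\tau^2+\Psi(\xi)^2)$. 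For the $t$-derivative only the first heat kernel $p_{t-u}$ depends on $t$; differentiating produces the factor $-\Psi(\xi)e^{-t\Psi(\xi)}$, and since $t\ge a>0$ this is what yields the exponential damping $e^{-a\Psi(\xi)}$ that makes the $\HP$-norm of $\partial_t g$ finite. By contrast, for $\partial_{x_j}$ the damping factor is $|e^{-t\Psi(\xi)}-1|^2$, which for large $|\xi|$ tends to $1$ and gives \emph{no} extra decay; the resulting condition for $L^2$-differentiability in $x$ is $\int_{|\xi|>1}|\xi|^{2-2\alpha H-\beta}\,d\xi<\infty$, i.e.\ essentially $H_2>1$. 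So your claim that ``the resulting kernels still lie in $\HP$ because each spatial or temporal derivative brings down a polynomial factor in $\xi$ that is dominated by the exponential decay'' is correct for the temporal derivative but not, in general, for the spatial ones. This is consistent with part~(ii), where the spatial modulus $\sigma$ explicitly distinguishes the three regimes of $H_2$. Your plan for part~(ii) is sound; for part~(i) you should restrict the differentiability claim to the time variable (which is how the statement should be read) and replace the ``$u\le -a$'' argument by the correct observation about the factor $e^{-t\Psi(\xi)}$.
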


Theorem \ref{thm:Y} can be proved by using 
%the similar methods of Theorem 4.8 and Theorem 4.9 of 
arguments similar to that of the proofs of Theorems 4.8 and 4.9 of
Xue and Xiao (2011). We omit the details here.

Because of Theorem \ref{thm:Y}, the regularity properties of $\{u(t,x)\}$ are the same as that of $\{U(t,x)\}.$ 
Now we work on the Gaussian random field $U.$ Notice that, assuming $0<s<t$,
%\begin{equation}\label{Eq:U0}
$$
U(t,x)-U(s,y)=\int_{\R}\int_{\R^d}\left(p_{(t-u)_+}(x-z)-p_{(s-u)_+}(y-z)\right)B(du,dz).
%\end{equation}
$$

We have the following result:

\begin{thm}\label{thm:U}
The Gaussian random field $U=\{U(t,x),t\ge 0,\,x\in\R^d\}$ has stationary 
increments with spectral measure  given by
\[F_U(d\xi,d\tau)=\frac{1}{\tau^{2H-1}\left(\tau^2+\Psi(\xi)^2\right)}\mu(d\xi)d\tau.\]
\end{thm}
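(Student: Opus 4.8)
The plan is to compute the covariance structure of $U$ directly and read off its spectral density. First I would show that $U$ has stationary increments: using the representation
\[
U(t,x)-U(s,y)=\int_{\R}\int_{\R^d}\Big(p_{(t-u)_+}(x-z)-p_{(s-u)_+}(y-z)\Big)B(du,dz),
\]
I would apply the isometry \eqref{Eq:Iso} to express $\EE\big[(U(t,x)-U(s,y))^2\big]$ as an integral against $|u-v|^{2H-2}f(z-z')$, and then use the Fourier-side form of the $\HP$ inner product (the second line of \eqref{Eq:HP-ip}) together with the Parseval identity \eqref{parseval}. The key computation is the spatial Fourier transform of $p_{(t-u)_+}(x-\cdot)-p_{(s-u)_+}(y-\cdot)$, which by the identity recalled just before Theorem \ref{Thm:gtx} equals $(2\pi)^{-d}\big(e^{-i\la\xi,x\ra-(t-u)_+\Psi(\xi)}-e^{-i\la\xi,y\ra-(s-u)_+\Psi(\xi)}\big)$ (using symmetry so $\Psi(\xi)=\Psi(-\xi)$ is real). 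After the temporal Fourier transform in $u$ over all of $\R$, I expect the $u$-integration to produce, for fixed $\xi$ and $\tau$, a factor of the form
\[
\frac{1}{i\tau+\Psi(\xi)}\Big(e^{-i\la\xi,x\ra}e^{-i\tau t}-e^{-i\la\xi,y\ra}e^{-i\tau s}\Big)
\]
up to constants — this is the point where extending the lower limit from $0$ to $-\infty$ in the definition of $U$ pays off, since it turns the truncated exponentials $\F h_1$ from Section \ref{Sec:Exi} into clean ones without the $(1-e^{\cdots})$ correction terms.

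Next I would assemble these pieces: combining the constants $q_H c_H = K_H$ with the $|\tau|^{1-2H}$ weight from the temporal transform and the $\mu(d\xi)$ from the spatial Parseval identity, I would obtain
\[
\EE\big[(U(t,x)-U(s,y))^2\big]=\text{const}\cdot\int_{\R^d}\int_{\R}\big|e^{-i\la\xi,x\ra}e^{-i\tau t}-e^{-i\la\xi,y\ra}e^{-i\tau s}\big|^2\,\frac{|\tau|^{1-2H}}{\tau^2+\Psi(\xi)^2}\,d\tau\,\mu(d\xi).
\]
Since $|e^{-i\la\xi,x\ra}e^{-i\tau t}-e^{-i\la\xi,y\ra}e^{-i\tau s}|^2 = 2 - 2\cos(\la\xi,x-y\ra+\tau(t-s)) = |1-e^{i(\la\xi,x-y\ra+\tau(t-s))}|^2$, this depends on $(t-s,x-y)$ only, which both confirms stationarity of the increments and exhibits the integral as exactly the canonical spectral representation $\int (1-\cos\la(\tau,\xi),(t-s,x-y)\ra)\,F_U(d\tau,d\xi)$ of a stationary-increment Gaussian field. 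Reading off the exponent $|\tau|^{1-2H}=\tau^{2H-1}$ appearing in the denominator (for $\tau>0$; the integrand is even in $\tau$), I recover $F_U(d\xi,d\tau)=\big(\tau^{2H-1}(\tau^2+\Psi(\xi)^2)\big)^{-1}\mu(d\xi)\,d\tau$ up to the constant, which can be absorbed or tracked explicitly.

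The main obstacle I anticipate is justifying the manipulations rigorously rather than formally: the random field $U$ is defined via the $\HP$-isometry on a dense class, so I must check that $p_{(t-u)_+}(x-\cdot)-p_{(s-u)_+}(y-\cdot)$, viewed as a function of $(u,z)$ on $\R\times\R^d$, actually lies in $\HP$ — equivalently that the double integral above is finite — before the Fourier computation is meaningful. Under the standing hypotheses (those of Theorem \ref{Thm:gtx}, giving $\|g_{t,x}\|_{\HP}<\infty$) this should follow, since the increment of $U$ differs from a difference of $g$-type kernels only through the stationary $Y$-part, and Theorem \ref{thm:Y} controls $Y$; but the bookkeeping with the $(\cdot)_+$ truncations and the interchange of the $u$-integral with the Fourier transform over the full line needs care, e.g. via a limiting argument approximating by functions in $\D((0,T)\times\R^d)$ or a direct Fubini justification using the integrability just established. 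Once finiteness is in hand, the rest is the routine Fourier bookkeeping sketched above.
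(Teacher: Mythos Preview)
Your proposal is correct and follows essentially the same approach as the paper's proof: both compute $\EE\big[(U(t,x)-U(s,y))^2\big]$ via the isometry, apply Parseval in space to pass to $\mu(d\xi)$, then apply a temporal Parseval/Fourier identity over the full line to produce the $|\tau|^{1-2H}$ weight, and finally exploit the extension to $u\in(-\infty,t)$ to obtain the clean factor $(i\tau+\Psi(\xi))^{-1}$ without the boundary correction terms present in $\F h_1$. The paper carries out exactly this computation at the formal level and does not explicitly address the rigor issue you flag; your anticipated obstacle is real but not treated in the paper's own argument.
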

\begin{proof}
For $0\le s<t,$ by Parseval's identity (\ref{parseval}) we have
%\begin{equation}\label{Eq:U1}
\begin{equation*}
\begin{split}
&\ex(U(t,x)-U(s,y))^2\\
&= q_H\int_\R\int_\R |u-v|^{2H-2}dudv\int_{\R^d}\int_{\R^d}\left(p_{(t-u)_+}(x-z)-p_{(s-u)_+}(y-z)\right)\\
&\quad\quad\qquad\quad\qquad\quad\qquad\times f(z-z')\left(p_{(t-v)_+}(x-z')-p_{(s-v)_+}(y-z')\right)dzdz'\\
&= (2\pi)^{-d}q_H\int_\R\int_\R |u-v|^{2H-2}dudv\int_{\R^d}\F\left(p_{(t-u)_+}(x-\cdot)-p_{(s-u)_+}(y-\cdot)\right)(\xi)\\
&\quad\quad\qquad\quad\qquad\quad\qquad\times \overline{\F\left(p_{(t-v)_+}(x-\cdot)-p_{(s-v)_+}(y-\cdot)\right)(\xi)}\,\mu(d\xi),\\
\end{split}
%\end{equation}
\end{equation*}
%where $\omega_d$ is the surface area of the $d$-dimensional unit ball.

%Notice the fact that the Fourier transform of $p_t(x)$ is given by
Note that the Fourier transform of $p_t(x)$ is given by
%\begin{equation}\label{Eq:Fpt}
$$
\F p_t(x-\cdot)(\xi)=e^{i\la x,\xi\ra-t\Psi(\xi)}\I_{\{t>0\}},\quad\xi\in\R^d,
%\end{equation}
$$
%we derive
thus
%\begin{equation}\label{Eq:U2}
\begin{equation*}
\begin{split}
&\F\left(p_{(t-u)_+}(x-\cdot)-p_{(s-u)_+}(y-\cdot)\right)(\xi)\\
&=\F p_{(t-u)_+}(x-\cdot)(\xi)-\F p_{(s-u)_+}(y-\cdot)(\xi)\\
&=e^{-i\la x,\xi\ra}e^{-(t-u)\Psi(\xi)}\I_{\{t>u\}}-e^{-i\la y,\xi\ra}e^{-(s-u)\Psi(\xi)}\I_{\{s>u\}}\\
&:=\phi_{t,s}(u,\xi).
\end{split}
%\end{equation}
\end{equation*}
By applying Fubini's theorem and Parseval's identity (in $x$), we have
% \begin{equation}\label{Eq:U3}
 \begin{equation*}
 \begin{split}
 &\ex(U(t,x)-U(s,y))^2\\
 &=c_H\int_\R\int_\R |u-v|^{2H-2}dudv\int_{\R^d}\phi_{t,s}(u,\xi)\overline{\phi_{t,s}(v,\xi)}\,\mu(d\xi)\\
 &=c_H\int_{\R^d}\mu(d\xi)\int_{\R}\widehat{\phi_{t,s}}(\cdot,\xi)(\tau)\overline{\widehat{\phi_{t,s}}(\cdot,\xi)(\tau)}|\tau|^{1-2H}d\tau\\
 &=c_H\int_{\R^d}\mu(d\xi)\int_{\R}\left|\widehat{\phi_{t,s}}(\cdot,\xi)(\tau)\right|^2|\tau|^{1-2H}d\tau,
 \end{split}
% \end{equation}
 \end{equation*}
 where
% \begin{equation} \label{Eq:U4}
$$
 	\widehat{\phi_{t,s}}(\cdot,\xi)(\tau)=\int_{\R}e^{i\tau r}\phi_{t,s}(r,\xi)dr.
% \end{equation}
$$
% Again, notice that
By change of variables, we have
% \begin{equation} \label{Eq:U5}
 \begin{equation*}
 	\begin{split}
	\widehat{\phi_{t,s}}(\cdot,\xi)(\tau)&=e^{-i\la x,\xi\ra}\int_{-\infty}^te^{i\tau r}e^{-(t-r)\Psi(\xi)}dr-e^{-i\la y,\xi\ra}\int_{-\infty}^s e^{i\tau r}e^{-(s-r)\Psi(\xi)}dr\\
	&=e^{-i\la x,\xi\ra-i\tau t}\int_0^\infty	e^{-i\tau r-r\Psi(\xi)}e^{-i\la y,\xi\ra-i\tau s}\int_0^\infty	e^{-i\tau r-r\Psi(\xi)}dr\\
	&=\left(e^{-i(\la x,\xi\ra+\tau t)}-e^{-i(\la y,\xi\ra+\tau s)}\right)\frac{1}{i\tau+\Psi(\xi)},
 	\end{split}
% \end{equation}
 \end{equation*}
% where we have used variable substitutions in the above derivation, we obtain that
thus
% \begin{equation}\label{Eq:U6}
 \begin{equation*}
 \begin{split}
  &\ex(U(t,x)-U(s,y))^2\\
  &=c_H\int_{\R^d}\int_{\R}\left|e^{-i(\la x,\xi\ra+\tau t)}-e^{-i(\la y,\xi\ra+\tau s)}\right|^2\frac{|\tau|^{1-2H}}{\left|i\tau+\Psi(\xi)\right|^2}\mu(d\xi)d\tau\\
  &=2c_H\int_{\R^{d+1}}\left[1-\cos(\la x-y, \xi\ra+(t-s)\tau)\right]\frac{1}{|\tau|^{2H-1}\left(\tau^2+\Psi(\xi)^2\right)}\mu(d\xi)d\tau,
   \end{split}
% \end{equation}
 \end{equation*}
 which proves Theorem \ref{thm:U}.
\end{proof}

\begin{cor}\label{cor:FU1}
%	If $\mu$ satisfies the following condition that
	If $\mu$ satisfies the condition that
	\begin{equation}\label{Eq:Cond_mu}
	\mu(d\xi)\asymp |\xi|^{-\beta},\quad \mbox {with}\quad 0<\beta<d,
	\end{equation}
then the density function of $U$ satisfies
%\begin{equation}\label{Eq:FU_den}
$$
F_U(d\xi,d\tau)\asymp\frac{1}{\tau^{2H-1}\left(\tau^2+\Psi(\xi)^2\right)|\xi|^{\beta}}d\xi d\tau.
%\end{equation}
$$
If, in addition, we assume that 
\begin{equation}\label{Eq:Cond_psi}
\Psi(\xi)\asymp|\xi|^\alpha L(\xi),
\end{equation}
where $0<\alpha\le 2$ and $L(\cdot)$ is a slowly varying function at 
%$\infty.$ Then $U$ has a spectral measure 
$\infty$,  then $U$ has a spectral measure
which is ``comparable'' [in the sense of (19) in Tudor and Xiao (2017)] to
%\begin{equation}\label{Eq:FU_den1}
$$
F_U(d\xi,d\tau)\asymp\frac{1}{\tau^{2H-1}\left(\tau^2+|\xi|^{2\alpha}
	L(\xi)^2\right)|\xi|^{\beta}}d\xi d\tau.
%\end{equation}
$$
\end{cor}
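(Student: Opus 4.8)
\textbf{Proof proposal for Corollary \ref{cor:FU1}.}

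The plan is to start from the exact formula for the spectral measure $F_U(d\xi, d\tau) = \big[\tau^{2H-1}(\tau^2+\Psi(\xi)^2)\big]^{-1}\mu(d\xi)\,d\tau$ furnished by Theorem \ref{thm:U}, and simply substitute the hypothesized comparabilities into it. The first step handles the factor $\mu(d\xi)$: under \eqref{Eq:Cond_mu} we have $\mu(d\xi)\asymp|\xi|^{-\beta}\,d\xi$ with $0<\beta<d$, which is precisely the assumption that the tempered measure $\mu$ is absolutely continuous with density comparable to the Riesz kernel. Plugging this directly into the formula of Theorem \ref{thm:U} yields
\[
F_U(d\xi,d\tau)\asymp\frac{1}{\tau^{2H-1}\left(\tau^2+\Psi(\xi)^2\right)|\xi|^{\beta}}\,d\xi\,d\tau,
\]
where throughout ``$\asymp$'' means the ratio of the two sides is bounded above and below by positive finite constants, uniformly in $(\xi,\tau)\in\R^{d+1}$. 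This is a pointwise comparison of densities and requires nothing beyond the substitution.

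The second step incorporates \eqref{Eq:Cond_psi}. Here I would observe that $\Psi(\xi)^2\asymp|\xi|^{2\alpha}L(\xi)^2$ for $|\xi|$ large (and, since $\Psi$ is continuous and the measure $\mu$ puts bounded mass on any compact set, the behaviour near $\xi=0$ contributes only a bounded multiplicative distortion that is absorbed into the $\asymp$). Consequently $\tau^2+\Psi(\xi)^2\asymp\tau^2+|\xi|^{2\alpha}L(\xi)^2$ as functions on $\R^{d+1}$, because for each fixed $\xi$ the two expressions $A+\tau^2$ and $B+\tau^2$ with $A\asymp B$ satisfy $A+\tau^2\asymp B+\tau^2$ with constants depending only on the ratio $A/B$. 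Substituting this into the display from the first step gives
\[
F_U(d\xi,d\tau)\asymp\frac{1}{\tau^{2H-1}\left(\tau^2+|\xi|^{2\alpha}L(\xi)^2\right)|\xi|^{\beta}}\,d\xi\,d\tau,
\]
which is the second assertion. The phrase ``comparable in the sense of (19) in Tudor and Xiao (2017)'' is just the statement that the two spectral densities have ratio bounded between two positive constants, so no further work is needed.

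The only genuine point requiring care—and the place I would be most careful in writing up—is the passage from the large-$|\xi|$ comparability in \eqref{e:stablelike}/\eqref{Eq:Cond_psi} to a \emph{global} comparability on all of $\R^{d+1}$, since $L(\xi)$ is only defined and slowly varying at infinity and $\Psi(\xi)\asymp|\xi|^\alpha L(\xi)$ is only assumed for $|\xi|$ large. One resolves this by splitting $\R^d$ into $\{|\xi|\le 1\}$ and $\{|\xi|>1\}$: on the unbounded piece the stated comparability applies directly, while on the bounded piece $\Psi$ is bounded and bounded below away from its zero set, so $\tau^2+\Psi(\xi)^2$ and $\tau^2+|\xi|^{2\alpha}L(\xi)^2$ are each comparable to $\tau^2+O(1)$, hence to one another. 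Since this corollary is only used later to invoke the strong-local-nondeterminism machinery (Section \ref{Sec:A}), which is itself insensitive to such bounded distortions, I would state these reductions briefly and not belabor the elementary estimates.
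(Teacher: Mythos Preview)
Your approach is correct and matches the paper's: the paper gives no proof at all for this corollary, treating it as an immediate consequence of substituting the hypotheses \eqref{Eq:Cond_mu} and \eqref{Eq:Cond_psi} into the spectral-measure formula of Theorem~\ref{thm:U}, which is exactly what you do. Your additional discussion of the small-$|\xi|$ region goes beyond what the paper bothers with; note, though, that your claim there that $\Psi$ is ``bounded below away from its zero set'' on $\{|\xi|\le 1\}$ is not quite right (indeed $\Psi(0)=0$), so the two quantities are not each comparable to $\tau^2+O(1)$ uniformly---but this does not matter, since the ``comparable in the sense of (19) in Tudor and Xiao (2017)'' framework to which the statement defers is designed to absorb exactly such local distortions.
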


\begin{remark}
Under the above two conditions (\ref{Eq:Cond_mu}) and (\ref{Eq:Cond_psi}), the stochastic heat equation (\ref{Eq:she_l}) has a solution if
%\begin{equation}\label{Eq:Cond_Sol}
$$
d-2\alpha H<\beta<d,\quad \mbox{or} \quad H>\frac{d-\beta}{2\alpha}\vee \frac 12.
%\end{equation}
$$
\end{remark}

For simplicity of presentation,  we assume from now on that $L(\cdot)\equiv 1.$
\begin{thm}\label{thm:SLNDU}
The Gaussian random field $U$ is strongly locally nondeterministic (SLND) in the following sense: 
there exists a positive constant $K$ such that for any positive 
integer $n$ and any $(t,x)\in(0,\,\infty)\times\R^d\backslash\{0\}$ and $(s^1,x^1),\ldots, (s^n,x^n)\in\R_+\times\R^d,$
\begin{equation}\label{Eq:SLNDU}
\var\left(U(t,x)\big | U(s^1,x^1),\ldots,U(s^n,x^n)\right)\ge K\min_{k=0,\ldots,n}(|t-s^k|^{H_1}+|x-x^k|^{H_2})^2.
\end{equation}
%Meanwhile,
Furthermore,
\begin{equation}\label{Eq:UpperU}
\ex\left[(U(t,x)-U(s,y))^2\right]\le K_1(|t-s|^{2H_1}+\sigma(|x-y|)),
\end{equation}
where $K_1>0$ is a constant, and 
%where $\sigma$ is defined the same as that in Theorem \ref{thm:Y}.
$\sigma$ is defined as  in Theorem \ref{thm:Y}.
In particular,
\begin{equation}\label{Eq:UpperU1}
\var\left(U(t,x)\big | U(s^1,x^1),\ldots,U(s^n,x^n)\right)\le K_1\min_{k=0,\ldots,n}(|t-s^k|^{2H_1}+\sigma(|x-x^k|)).
\end{equation}
\end{thm}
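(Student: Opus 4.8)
The plan is to deduce all three assertions from the spectral representation of $U$ obtained in Theorem~\ref{thm:U} together with the general strong-local-nondeterminism criterion promised for Section~\ref{Sec:A}. By Theorem~\ref{thm:U}, $U$ is a real-valued centered Gaussian field with stationary increments and spectral measure
\[
F_U(d\xi,d\tau)\asymp \frac{1}{\tau^{2H-1}\bigl(\tau^2+|\xi|^{2\alpha}\bigr)|\xi|^{\beta}}\,d\xi\,d\tau
\]
(using $L\equiv 1$ and Corollary~\ref{cor:FU1}). The first step is to record the associated canonical metric: from the last display in the proof of Theorem~\ref{thm:U},
\[
\ex\bigl[(U(t,x)-U(s,y))^2\bigr]=2c_H\int_{\R^{d+1}}\bigl[1-\cos(\la x-y,\xi\ra+(t-s)\tau)\bigr]\,F_U(d\xi,d\tau),
\]
so the scaling behaviour in the time and space directions is governed by the one-parameter integrals $\int (1-\cos(h\tau))F_U$ and $\int(1-\cos\la w,\xi\ra)F_U$. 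I would first verify the upper bound \eqref{Eq:UpperU} by splitting each such integral at $|\tau|\asymp |t-s|^{-1}$ (resp. $|\xi|\asymp |x-y|^{-1}$), using $1-\cos\le \min(1,(\cdot)^2/2)$ on the two ranges; the time integral contributes $|t-s|^{2H_1}$ with $H_1=H-\frac{d-\beta}{\alpha}$, and the space integral contributes $\sigma(|x-y|)$, the three regimes of $\sigma$ reflecting whether the relevant power of $|\xi|$ is integrable near infinity ($H_2=\alpha H_1<1$), marginal ($H_2=1$), or dominated by the quadratic truncation ($H_2>1$). This is essentially the computation behind Corollary~\ref{cor:FU1} and I expect it to be routine, modulo bookkeeping of the $\asymp$ constants and the slowly varying remainder.

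For the lower bound \eqref{Eq:SLNDU}, the approach is to invoke the general SLND theorem of Section~\ref{Sec:A}: a centered Gaussian field with stationary increments whose spectral measure has density bounded below (up to a constant) by a comparable ``anisotropic'' density of the above product form is SLND with exponents $(H_1,\dots,H_1,H_2,\dots,H_2)$. Concretely, I would apply the standard Fourier/Plancherel argument: write $U(t,x)=\int e^{i(\la x,\xi\ra+t\tau)}\,W(d\xi,d\tau)$ for the associated complex Gaussian measure $W$ with control measure $F_U$, let $\delta=\min_{0\le k\le n}(|t-s^k|^{H_1}+|x-x^k|^{H_2})$, and estimate
\[
\var\bigl(U(t,x)\mid U(s^1,x^1),\dots,U(s^n,x^n)\bigr)=\inf_{a_1,\dots,a_n}\ex\Bigl[\Bigl(U(t,x)-\sum_{k=1}^n a_k U(s^k,x^k)\Bigr)^2\Bigr]
\]
from below by integrating $\bigl|e^{i(\la x,\xi\ra+t\tau)}-\sum_k a_k e^{i(\la x^k,\xi\ra+s^k\tau)}\bigr|^2$ against $F_U$ over the ``high-frequency'' box $\{|\tau|\le c\delta^{-1/H_1},\ |\xi|\le c\delta^{-1/H_2}\}$ after multiplying by a suitable smooth bump whose Fourier transform concentrates at scale $\delta$; the oscillatory factor is $\ge \tfrac12$ on that box for small enough $c$, and the mass of $F_U$ over the box is $\gtrsim \delta^2$ by direct integration. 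The main obstacle will be handling the anisotropic and non-homogeneous nature of the density — the time variable enters through the non-standard weight $\tau^{1-2H}/(\tau^2+|\xi|^{2\alpha})$ rather than a clean power of $(|\tau|+|\xi|^\alpha)$ — so one must check carefully that the Section~\ref{Sec:A} criterion applies and that the lower mass estimate survives the coupling between $\tau$ and $\xi$; I expect this to reduce, after the change of variables $\tau\mapsto |\xi|^\alpha v$, to the one-dimensional integrals already analysed in the proof of Theorem~\ref{Thm:gtx} and Lemma~\ref{Lem:gtx}.

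Finally, \eqref{Eq:UpperU1} is immediate: conditioning can only decrease variance, so
\[
\var\bigl(U(t,x)\mid U(s^1,x^1),\dots,U(s^n,x^n)\bigr)\le \min_{k=1,\dots,n}\ex\bigl[(U(t,x)-U(s^k,x^k))^2\bigr],
\]
and applying \eqref{Eq:UpperU} to each term (with the $k=0$ term interpreted via $\ex[U(t,x)^2]$, which is of the same order by stationarity of increments and the choice $(t,x)\ne 0$) yields the stated bound. Thus the crux of the theorem is the lower bound \eqref{Eq:SLNDU}, and the cleanest route is to state it as a consequence of the self-contained SLND result of Section~\ref{Sec:A} applied to the explicit spectral density of $U$.
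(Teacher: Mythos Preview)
Your overall strategy matches the paper's: invoke the general SLND result of Section~\ref{Sec:A} for \eqref{Eq:SLNDU}, estimate the canonical metric directly for \eqref{Eq:UpperU}, and deduce \eqref{Eq:UpperU1} by the trivial inequality $\var(\cdot\mid\cdots)\le\min_k\ex[(U(t,x)-U(s^k,x^k))^2]$. The paper proceeds exactly this way, citing Xue--Xiao and Tudor--Xiao for the upper bound rather than redoing the splitting you sketch.

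The one place you anticipate difficulty that does not in fact arise is the verification of the hypothesis \eqref{Eq:AScaling} of Theorem~\ref{thm:SLND}. You worry that the density $f_U(\tau,\xi)=\tau^{1-2H}(\tau^2+|\xi|^{2\alpha})^{-1}|\xi|^{-\beta}$ is ``non-homogeneous'' because of the coupling between $\tau$ and $\xi$, and suggest a change of variables $\tau\mapsto|\xi|^\alpha v$ to decouple them. In the paper this step is a one-line computation: with $\gamma_1=H_1$ and $\gamma_2=\cdots=\gamma_{d+1}=H_2=\alpha H_1$, the choice $H_2=\alpha H_1$ forces $(c^{1/H_1}\tau)^2$ and $|c^{1/H_2}\xi|^{2\alpha}$ to carry the \emph{same} power of $c$, so the bracket $\tau^2+|\xi|^{2\alpha}$ scales cleanly and one gets
\[
f_U\bigl(c^{1/H_1}\tau,\,c^{1/H_2}\xi\bigr)=c^{-\left(\frac{2H+1}{H_1}+\frac{\beta}{H_2}\right)}f_U(\tau,\xi)=c^{-(2+Q)}f_U(\tau,\xi),
\]
which is exactly \eqref{Eq:AScaling}. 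No further analysis (and in particular no appeal to Lemma~\ref{Lem:gtx}) is needed for the lower bound; the ``high-frequency box'' and bump-function argument you outline is already packaged inside Theorem~\ref{thm:SLND}.
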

\begin{proof}
%By Theorem \ref{thm:SLND} in the Appendix, we only need to check that
It follows from Theorem \ref{thm:SLND} in the Appendix that, to prove (\ref{Eq:SLNDU}), 
we only need to check that
\begin{equation}
f_U(\tau,\xi):=\frac{1}{\tau^{2H-1}\left(\tau^2+|\xi|^{2\alpha}\right)|\xi|^{\beta}}
\end{equation}
satisfies (\ref{Eq:AScaling}) for some $\gamma=(\gamma_1,\,\gamma_2\ldots,\gamma_{d+1})\in(0,\,1)^{d+1}.$ 
In fact, by taking $\gamma_1=H_1,$ $\gamma_2=\cdots=\gamma_{d+1}=H_2=\alpha H_1,$ we have, for any $c>0,$
\begin{equation}\label{Eq:AsympS1}
f\left(c^{H_1^{-1}}\tau,c^{H_2^{-1}}\xi\right)=c^{-\left(\frac{2H+1}{H_1}+\frac{\beta}{H_2}\right)}f_U(\tau,\xi)=c^{-(2+Q)}f_U(\tau,\xi),
\end{equation}
where $Q=\frac1{H_1}+\frac{d}{H_2}.$ Therefore, (\ref{Eq:SLNDU}) follows 
%by applying Theorem \ref{thm:SLND}.
from Theorem \ref{thm:SLND}.

%Meanwhile, (\ref{Eq:UpperU}) follows from the proof of Lemma 3.2 in Xue and Xiao and 
The inequality (\ref{Eq:UpperU}) follows from the proofs of Lemma 3.2 in Xue and Xiao and 
Theorem 4 in Tudor and Xiao (2017). Finally, (\ref{Eq:UpperU1}) follows directly from (\ref{Eq:UpperU}). 
This completes the proof of Theorem \ref{thm:SLNDU}.
\end{proof}

Since $H_1\in(0,\,1),$ the solution is rough in $t.$ However, $H_2=\alpha H_1$ may be bigger than 1, 
and in this case $x\mapsto U(t,x)$ is differentiable. Hence, in order to give the exact uniform modulus 
of continuity, we distinguish three cases: (i) $H_2<1,$ (ii) $H_2=1$ and (iii) $H_2>1.$ We will study 
case (i) and case (iii) in this paper, case (ii) is more subtle and we have not been able to solve it completely.

We consider case (i) at first. We want to point out that when $0<\alpha\le 1,$ $H_2<1.$ In this case, as in 
\cite{tudor}, by applying the results on uniform and local moduli of continuity for Gaussian processes/fields 
(see, e.g. \cite{meers}), we have the corresponding regularity results on the solution $\{u(t,x),\,t\ge 0,\,x\in\R^d\},$ 
%[cf. Theorem 6.2 in Meerschaert, Wang and Xiao (2013)].
cf. Theorem 6.2 in Meerschaert, Wang and Xiao (2013).

\begin{prop}\label{Prop:mod}
Suppose that $H_2<1.$ Then the following results hold:
\begin{description}
\item (i) (Uniform modulus of continuity) For any $I:=[a,\,b]\times[-M,\,M]^d\subset\R_+\times\R^d$ with 
$0<a<b<\infty$ and $M>0,$ there is a constant $\kappa_1\in (0,\,\infty)$ such that
%\begin{equation}\label{Eq:Umod}
$$
\lim_{\varepsilon\to 0^+}\sup_{(t,x),(s,y)\in I:\rho(t,x;s,y)\le\varepsilon}\frac{|u(t,x)-u(s,y)|}
{\rho(t,x;s,y)\sqrt{\log(1+\rho(t,x;s,y)^{-1})}}=\kappa_1,\qquad a.s.
%\end{equation}
$$
where $\rho(t,x;s,y)=|t-s|^{H_1}+|x-y|^{H_2}.$
\item (ii) (Local modulus of continuity)  There is a constant $\kappa_2\in (0,\,\infty)$ 
such that for any $(t,x)\in I,$
%\begin{equation}\label{Eq:Lmod}
$$
\lim_{\varepsilon\to 0^+}\sup_{(s,y): \tilde{\rho}(s,y)\le\varepsilon}\frac{|u(t+s,x+y)-u(t,x)|}{\tilde{\rho} (s,y)
\sqrt{\log\log(1+\tilde{\rho}(s,y)^{-1})}}=\kappa_2,\qquad a.s.
%\end{equation}
$$
where $\tilde{\rho}(s,y)=|s|^{H_1}+|y|^{H_2}.$
\end{description}
\end{prop}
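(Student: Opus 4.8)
The plan is to deduce Proposition \ref{Prop:mod} from the general theory of uniform and local moduli of continuity for Gaussian random fields with stationary increments, applied to the field $U$, and then transfer the conclusion to $u$ using the decomposition $u = U - Y$ and the regularity of $Y$ from Theorem \ref{thm:Y}. First I would observe that $u(t,x) = U(t,x) - Y(t,x)$, and that on any compact $I = [a,b]\times[-M,M]^d$ with $0<a<b$, part (i) of Theorem \ref{thm:Y} provides a modification of $Y$ whose sample functions are continuously differentiable, hence Lipschitz on $I$. Consequently, the oscillation of $Y$ over balls of radius $\varepsilon$ in the metric $\rho$ is $O(\varepsilon^{1/H_2}) = o\bigl(\varepsilon\sqrt{\log(1+\varepsilon^{-1})}\bigr)$ (using $H_2 < 1$, so $1/H_2 > 1$), and similarly for $\tilde\rho$ with $\log\log$ in place of $\log$. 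Thus $Y$ contributes nothing to the limits in (i) and (ii), and it suffices to prove the statements with $u$ replaced by $U$.

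Next I would verify that $U$ satisfies the hypotheses of the standard Gaussian modulus-of-continuity theorems — specifically Theorem 6.2 of Meerschaert, Wang and Xiao (2013), as cited in the statement. These require: (a) $U$ has stationary increments, which is Theorem \ref{thm:U}; (b) the canonical metric $d_U((t,x),(s,y)) = \sqrt{\ex[(U(t,x)-U(s,y))^2]}$ is comparable to $\rho(t,x;s,y) = |t-s|^{H_1} + |x-y|^{H_2}$ on $I$; and (c) a strong local nondeterminism property holds. For (b), the upper bound $\ex[(U(t,x)-U(s,y))^2] \le K_1\bigl(|t-s|^{2H_1} + \sigma(|x-y|)\bigr)$ is \eqref{Eq:UpperU}, and since $H_2 < 1$ we have $\sigma(r) = r^{2H_2}$, so this is $\le K_1\,\rho(t,x;s,y)^2$ up to constants; the matching lower bound follows from the SLND estimate \eqref{Eq:SLNDU} in Theorem \ref{thm:SLNDU} by taking $n=1$. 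For (c), the SLND in the form \eqref{Eq:SLNDU} and \eqref{Eq:UpperU1} is exactly what is needed, with index vector $(H_1, H_2, \ldots, H_2)$. One then checks that $U$ restricted to $I$ (with $a>0$, so the temporal singularity at $t=0$ is avoided) is a nondegenerate Gaussian field to which the cited theorem applies verbatim, yielding (i) with some finite positive constant $\kappa_1$ and (ii) with some $\kappa_2$.

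The main obstacle — or rather the main point requiring care — is matching the precise normalizations and anisotropy conventions of the general theorem to the present setting. The Meerschaert--Wang--Xiao results are stated for anisotropic Gaussian fields whose canonical metric behaves like $\sum_j |t_j - s_j|^{H_j}$; here the "coordinates" are $(t, x_1, \ldots, x_d)$ with Hurst exponents $(H_1, H_2, \ldots, H_2)$, and one must confirm that the quantity $\rho(t,x;s,y) = |t-s|^{H_1} + |x-y|^{H_2}$ (with the Euclidean norm on the $x$-block rather than a sum of one-dimensional terms) is equivalent to the metric appearing in that paper — this is a routine equivalence of norms on $\R^d$, using $H_2$-subadditivity when $H_2 \le 1$. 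One also needs the upper bound in \eqref{Eq:UpperU} to hold uniformly on $I$ including a bound on $|x-y|$ away from the degenerate regime, which is immediate since $I$ is bounded. I would then simply invoke the theorem. Because these are essentially bookkeeping verifications against a known result, I expect the proof to be short: the substantive inputs (stationarity, SLND, two-sided variance bounds) have already been established in Theorems \ref{thm:Y}, \ref{thm:U} and \ref{thm:SLNDU}, and the remaining work is to cite \cite{meers} and observe that the $Y$-term is negligible at both the uniform ($\log$) and local ($\log\log$) scales.
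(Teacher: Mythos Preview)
Your proposal is correct and follows essentially the same approach as the paper: the paper does not give a detailed proof of this proposition but simply refers to Theorem 6.2 of Meerschaert, Wang and Xiao \cite{meers} (together with the framework of \cite{tudor}), relying on the decomposition $u = U - Y$, the smoothness of $Y$ from Theorem \ref{thm:Y}, and the stationary-increments/SLND properties of $U$ from Theorems \ref{thm:U} and \ref{thm:SLNDU}. Your write-up is in fact more explicit than the paper's own treatment; the only minor slip is that the Euclidean oscillation of $Y$ over a $\rho$-ball of radius $\varepsilon$ is $O(\varepsilon^{1/H_1} + \varepsilon^{1/H_2})$ rather than just $O(\varepsilon^{1/H_2})$, but since both exponents exceed $1$ this does not affect the conclusion.
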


As corollaries, we have the corresponding results that generalize those in 
Tudor and Xiao (2017) for fixed $x$ and $t,$ respectively.
\begin{cor}\label{Cor:mod_t}
For $x\in \R^d$ fixed.  we have the following modulus of continuity results in time:
\begin{description}
\item (i) (Uniform modulus of continuity) For any $b>0,$ there is a constant $\kappa_3\in (0,\,\infty)$ such that
%\begin{equation}\label{Eq:Umod2}
$$
\lim_{\varepsilon\to 0^+}\sup_{t,s\in[0,b]:|t-s|\le\varepsilon}\frac{|u(t,x)-u(s,x)|}{|t-s|^{H_1}\sqrt{\log(1+|t-s|^{-1})}}=\kappa_3,\qquad a.s.
%\end{equation}
$$
\item (ii) (Local modulus of continuity)  There is a constant $\kappa_4\in (0,\,\infty)$ such that for any $t\in(0,\infty)$
%\begin{equation}\label{Eq:Lmod2}
$$
\lim_{\varepsilon\to 0^+}\sup_{|s| \le\varepsilon}\frac{|u(t+s,x)-u(t,x)|}{|s|^{H_1}\sqrt{\log\log(1+|s|^{-1})}}=\kappa_4,\qquad a.s.
%\end{equation}
$$
\end{description}
\end{cor}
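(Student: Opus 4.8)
\textbf{Proof proposal for Corollary \ref{Cor:mod_t}.}
The plan is to deduce both statements directly from Proposition \ref{Prop:mod} by specializing the space variable. First I would fix $x\in\R^d$ and restrict attention to the process $t\mapsto u(t,x)$. For part (i), given $b>0$, pick $a\in(0,b)$ and apply Proposition \ref{Prop:mod}(i) on the rectangle $I=[a,b]\times\{x\}$ (more precisely, on $[a,b]\times[-M,M]^d$ with $M\ge|x|$, then intersect the supremum with the slice $y=x$). On that slice one has $\rho(t,x;s,x)=|t-s|^{H_1}$, so the general modulus collapses to the stated one-parameter form with the same constant $\kappa_1$. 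Taking $a\to 0^+$ and using continuity of $u$ up to $t=0$ (recall $u(0,x)=0$, and the modification is continuous on $[0,\infty)\times\R^d$) extends the statement from $[a,b]$ to $[0,b]$; the limit constant $\kappa_3$ is then $\kappa_1$, independent of $b$ by the usual scaling/independence-of-interval argument for exact moduli of SLND Gaussian fields. Part (ii) is even more immediate: Proposition \ref{Prop:mod}(ii) already gives a local modulus at an arbitrary point $(t,x)$, and restricting the supremum over $(s,y)$ with $\tilde\rho(s,y)\le\varepsilon$ to the subfamily $y=0$ yields $\tilde\rho(s,0)=|s|^{H_1}$ and the claimed law of the iterated logarithm with $\kappa_4=\kappa_2$.

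The one point requiring a small argument is that restricting the supremum to a lower-dimensional slice does not decrease the limsup/liminf, i.e. that the exact constant is genuinely attained already along the time direction. This follows because the increments $u(t,x)-u(s,x)$ of the slice process are, by Theorem \ref{thm:U} and Corollary \ref{cor:FU1}, themselves those of a one-parameter Gaussian process with stationary increments whose canonical metric is comparable to $|t-s|^{H_1}$ and which inherits strong local nondeterminism from Theorem \ref{thm:SLNDU} (take all the $x^k$ equal to $x$ in (\ref{Eq:SLNDU}) and (\ref{Eq:UpperU1})). Hence the one-parameter results of Meerschaert, Wang and Xiao (2013) apply directly to $t\mapsto u(t,x)$ and produce a finite positive constant; that this constant equals the one in Proposition \ref{Prop:mod} is exactly the content of the comparison, since both are computed from the same small-ball behaviour.

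The main obstacle, such as it is, is the passage from $[a,b]$ to $[0,b]$ in part (i): near $t=0$ the random field $U$ and hence $u$ need not be stationary-increment in a uniform way, and the SLND estimate (\ref{Eq:SLNDU}) is stated for $(t,x)\in(0,\infty)\times\R^d\setminus\{0\}$. I would handle this by writing $[0,b]=\bigcup_n[b/2^{n+1},b/2^n]\cup\{0\}$, applying the already-established modulus on each dyadic piece with a uniform constant (the constant from Proposition \ref{Prop:mod}(i) does not depend on $a$ once $a>0$, by the scaling relation (\ref{Eq:AsympS1})), and then checking that increments straddling two adjacent pieces, or involving $t=0$, are controlled by (\ref{Eq:UpperU}) together with the Gaussian tail bound — exactly the standard chaining step used to prove Proposition \ref{Prop:mod} in the first place. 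Everything else is a routine specialization, so I would simply cite Proposition \ref{Prop:mod} and Theorem \ref{thm:SLNDU} and indicate these two points.
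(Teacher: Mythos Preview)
The paper gives no proof for this corollary; it is presented as an immediate consequence of the same machinery used for Proposition~\ref{Prop:mod} (the SLND in Theorem~\ref{thm:SLNDU} together with the results of Meerschaert, Wang and Xiao). Your second paragraph contains exactly that argument and is the correct route: the one--parameter process $t\mapsto U(t,x)$ has stationary increments (Theorem~\ref{thm:U}), its canonical metric is comparable to $|t-s|^{H_1}$ by (\ref{Eq:UpperU}), and specializing (\ref{Eq:SLNDU}) with $x^1=\cdots=x^n=x$ gives one--parameter SLND; then \cite{meers} applies directly and yields some $\kappa_3,\kappa_4\in(0,\infty)$. That is all the corollary claims.

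The first paragraph, however, is not a proof. Restricting the supremum in Proposition~\ref{Prop:mod} to the slice $y=x$ (or $y=0$ in part (ii)) can only \emph{decrease} the supremum, so from Proposition~\ref{Prop:mod} you get at best $\limsup \le \kappa_1$ (resp.\ $\le \kappa_2$), not an exact limit. Your claim that the slice constant actually equals $\kappa_1$ (and $\kappa_4=\kappa_2$) is neither justified nor needed: the exact constants in \cite{meers} are determined by small--ball probabilities of the restricted process, which are in general strictly different from those of the full $(d{+}1)$--parameter field. Drop that identification and simply invoke the one--parameter argument of your second paragraph as the proof.

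The $[0,b]$ versus $[a,b]$ issue you raise is real but easier than your dyadic patching suggests: once you work with the slice $t\mapsto U(t,x)$, that process has stationary increments on all of $\R$, so the modulus of continuity result from \cite{meers} applies directly on $[0,b]$ without any restriction $a>0$; the condition $a>0$ in Proposition~\ref{Prop:mod} was only there because of the smoothness statement for $Y$ in Theorem~\ref{thm:Y}(i), and the modulus for $Y$ in Theorem~\ref{thm:Y}(ii) already covers $a=0$.
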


\begin{cor}\label{Cor:mod_x}
Suppose that $H_2<1.$ For any $t>0$ fixed, we have the following modulus of continuity results in space:
\begin{description}
\item (i) (Uniform modulus of continuity) For any $M>0,$ there is a constant $\kappa_5\in (0,\,\infty)$ such that
%\begin{equation}\label{Eq:Umod3}
$$
\lim_{\varepsilon\to 0^+}\sup_{x,y\in [-M,M]^d: |x-y|\le\varepsilon}\frac{|u(t,x)-u(t,y)|}{|x-y|^{H_2}\sqrt{\log(1+|x-y|^{-1})}}=\kappa_5,\qquad a.s.
%\end{equation}
$$
\item (ii) (Local modulus of continuity) There is a constant $\kappa_6\in (0,\,\infty)$ such that for all $x\in\R^d$
%\begin{equation}\label{Eq:Lmod3}
$$
\lim_{\varepsilon\to 0^+}\sup_{y:|y|\le\varepsilon}\frac{|u(t,x+y)-u(t,x)|}{|y|^{H_2}\sqrt{\log\log(1+|y|^{-1})}}=\kappa_6,\qquad a.s.
%\end{equation}
$$
\end{description}
\end{cor}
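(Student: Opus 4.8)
The plan is to derive this spatial modulus of continuity from Proposition \ref{Prop:mod} by fixing the time variable $t>0$ and restricting attention to a slice $\{t\}\times[-M,M]^d$. Concretely, for a fixed $t > 0$, choose an interval $[a,b]\ni t$ with $0<a<b<\infty$; then for $x,y\in[-M,M]^d$ we have $(t,x),(t,y)\in I:=[a,b]\times[-M,M]^d$, and the quasi-metric from Proposition \ref{Prop:mod} specializes to $\rho(t,x;t,y)=|t-t|^{H_1}+|x-y|^{H_2}=|x-y|^{H_2}$. First I would verify that restricting the supremum in Proposition \ref{Prop:mod}(i) to pairs of the form $(t,x),(t,y)$ is legitimate — this is immediate since such pairs form a subset of those over which the sup is taken, so the upper bound is inherited. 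For the matching lower bound, the point is that the SLND property (\ref{Eq:SLNDU}) and the scaling (\ref{Eq:AsympS1}) of $f_U$ are anisotropic with index $H_2$ in each spatial coordinate, so the Gaussian field $x\mapsto U(t,x)$ on $[-M,M]^d$ is itself SLND with respect to $|x-y|^{H_2}$; hence the standard lower-bound argument for exact moduli of continuity (as in Meerschaert--Wang--Xiao (2013), invoked in Proposition \ref{Prop:mod}) applies verbatim to the slice.

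The second step is to transfer from $U$ to $u$. By the decomposition $u(t,x)=U(t,x)-Y(t,x)$ and Theorem \ref{thm:Y}(i), for $t>0$ fixed (so that $t$ lies in some $[a,b]$ with $a>0$) the field $(s,y)\mapsto Y(s,y)$ has a modification that is continuously differentiable in a neighborhood of $\{t\}\times[-M,M]^d$; in particular $y\mapsto Y(t,y)$ is Lipschitz on $[-M,M]^d$. Since $|x-y|^{H_2}\big/\big(|x-y|\sqrt{\log(1+|x-y|^{-1})}\big)\to\infty$ as $|x-y|\to 0$ when $H_2<1$, the $Y$-contribution is negligible at the scale $|x-y|^{H_2}\sqrt{\log(1+|x-y|^{-1})}$, so the uniform modulus of $u$ equals that of $U$, giving part (i) with some $\kappa_5\in(0,\infty)$. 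Part (ii), the local (Chung-type / LIL-type) modulus, follows the same way: apply the local modulus of continuity for SLND Gaussian fields (Proposition \ref{Prop:mod}(ii) restricted to the slice through the fixed base point $(t,x)$) to $U(t,x+y)$, note $\tilde\rho(0,y)=|y|^{H_2}$, and absorb the smooth $Y$-term, which again is $o\big(|y|^{H_2}\sqrt{\log\log(1+|y|^{-1})}\big)$.

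The main obstacle I anticipate is not the upper bound but making the \emph{lower} bound on the slice fully rigorous: Proposition \ref{Prop:mod} is stated for the full $(d+1)$-dimensional field on $I$, and one must check that its proof actually yields, along each fixed-$t$ slice, a nondegenerate limit $\kappa_5>0$ rather than merely $\kappa_5\ge 0$. This requires that the SLND inequality (\ref{Eq:SLNDU}), when all time coordinates are equal to $t$, still produces a genuine lower bound of order $\min_k|x-x^k|^{2H_2}$ — which it does, since setting $s^k=t$ in (\ref{Eq:SLNDU}) gives $\var(U(t,x)\mid U(t,x^1),\dots,U(t,x^n))\ge K\min_k |x-x^k|^{2H_2}$, i.e. the slice field is SLND in the classical (space-only) sense with exponent $H_2$. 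Combined with the upper bound (\ref{Eq:UpperU1}) (restricted to $s^k=t$), this places $y\mapsto U(t,y)$ exactly in the framework where the sharp uniform and local moduli of continuity hold with finite positive constants, so the argument closes. A minor technical point, handled as above, is ensuring the fixed time $t$ is bounded away from $0$ so that Theorem \ref{thm:Y}(i) applies to the $Y$-part; for $t>0$ this is automatic.
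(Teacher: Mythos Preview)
Your proposal is correct and follows the same route the paper takes: the paper simply states Corollary~\ref{Cor:mod_x} (and Corollary~\ref{Cor:mod_t}) as immediate consequences of Proposition~\ref{Prop:mod}, without giving a separate proof. Your argument---restricting to the slice $\{t\}\times[-M,M]^d$, noting that $\rho(t,x;t,y)=|x-y|^{H_2}$, checking that SLND persists on the slice via \eqref{Eq:SLNDU} with $s^k=t$, and absorbing the smooth $Y$-contribution using Theorem~\ref{thm:Y}(i)---is exactly the justification the paper leaves implicit, so you have supplied more detail than the paper itself does rather than taking a different path.
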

Further properties on the local time and fractal behaviour of the solution $\{u(t,x)\},$ can also be derived from \cite{xiao97} \cite{xiao07} \cite{xiao09}.

Next, by combining Theorem \ref{thm:SLNDU} with Theorem 1.1 in Luan and Xiao \cite{LX10},  
we derive the following Chung-type 
%laws of the iterated logarithm for  the solution  $\{u(t,x)\}.$
law of the iterated logarithm for  the solution  $\{u(t,x)\}.$
\begin{prop}\label{Prop:Chung}
Suppose that $H_2<1.$ Then there is a constant $\kappa_7\in (0,\,\infty)$
such that for any $(t,x)\in I,$
%\begin{equation}\label{Eq:Chung}
$$
\liminf_{\varepsilon\to 0^+}\sup_{(s,y):\tilde{\rho}(s,y)\le\varepsilon}\frac{|u(t+s,x+y)-u(t,x)|}{\varepsilon 
(\log\log1/\varepsilon)^{-1/Q}}=\kappa_7,\qquad a.s.
%\end{equation}
$$
where $Q = \frac 1 {H_1}+ \frac{d}{H_2}.$
\end{prop}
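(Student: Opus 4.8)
\textbf{Proof proposal for Proposition \ref{Prop:Chung}.}
The plan is to reduce the statement about the solution $u$ to a statement about the Gaussian random field $U$, and then invoke Theorem 1.1 of Luan and Xiao \cite{LX10}. First I would recall from Theorem \ref{thm:Y}(i) that, since $t>0$, the field $Y$ has a modification that is continuously differentiable near $(t,x)$; hence $Y(t+s,x+y)-Y(t,x) = O(|s| + |y|)$ as $(s,y)\to 0$. Since $H_1 < 1$ (and $H_2<1$ by hypothesis), we have $\tilde\rho(s,y)=|s|^{H_1}+|y|^{H_2} \gg |s|+|y|$ as $(s,y)\to 0$, so the $Y$-contribution is negligible relative to the normalization $\varepsilon(\log\log 1/\varepsilon)^{-1/Q}$ when $\tilde\rho(s,y)\le\varepsilon$: indeed $\varepsilon(\log\log1/\varepsilon)^{-1/Q}$ decays slower than any power of $\varepsilon$ strictly larger than $1$, while $\sup_{\tilde\rho(s,y)\le\varepsilon}(|s|+|y|)$ is of order $\varepsilon^{1/H_1}\vee\varepsilon^{1/H_2}$, a strictly higher power. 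Therefore the $\liminf$ for $u=U-Y$ coincides with the $\liminf$ for $U$, and it suffices to prove the Chung-type LIL for $U$.

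Next I would verify that $U$ satisfies the hypotheses of Theorem 1.1 in \cite{LX10} on a neighborhood of $(t,x)$. These are exactly the ingredients assembled in Theorem \ref{thm:SLNDU}: the anisotropic SLND property \eqref{Eq:SLNDU} gives the lower bound on the conditional variance with exponents $(H_1,H_2,\ldots,H_2)$, and \eqref{Eq:UpperU} (with $\sigma(r)\asymp r^{2H_2}$ in case $H_2<1$) gives the matching upper bound $\ex[(U(t,x)-U(s,y))^2]\le K_1(|t-s|^{2H_1}+|x-y|^{2H_2})$, i.e. the field is, up to constants, "$(H_1,H_2)$-index-$\alpha$" in the terminology of that paper. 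I would also note that $U$ has stationary increments (Theorem \ref{thm:U}), so the relevant quantities are homogeneous and the constant $\kappa_7$ does not depend on the base point $(t,x)$ (for $(t,x)$ in a fixed compact set bounded away from the singular set $\{t=0\}\cup\{x=0\}$). Applying \cite[Theorem 1.1]{LX10} with metric $\tilde\rho$ then yields
\[
\liminf_{\varepsilon\to 0^+}\sup_{\tilde\rho(s,y)\le\varepsilon}\frac{|U(t+s,x+y)-U(t,x)|}{\varepsilon(\log\log 1/\varepsilon)^{-1/Q}}=\kappa_7\in(0,\infty),\qquad a.s.,
\]
where $Q=\sum_j H_j^{-1}=\frac1{H_1}+\frac{d}{H_2}$ is the anisotropy index appearing in \eqref{Eq:AsympS1}.

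Finally I would combine the two steps: writing $u=U-Y$ and using the triangle inequality together with the negligibility of the $Y$-increments established in the first step, the $\liminf$ for $u$ equals the $\liminf$ for $U$, which is $\kappa_7$. The main obstacle, and the only nontrivial point beyond bookkeeping, is checking that the regularity hypotheses of \cite{LX10} are met in exactly the anisotropic form needed — in particular that the two-sided bounds on the conditional variances (SLND from below, and the $\sigma$-modulus from above) are precisely of the homogeneous form required there, with the correct pairing of exponents $H_1$ in time and $H_2$ in each spatial coordinate, and that the resulting constant $\kappa_7$ is genuinely finite and positive and independent of the base point. This is where Theorem \ref{thm:SLNDU} does the heavy lifting; the case $H_2<1$ is assumed precisely so that $\sigma(r)\asymp r^{2H_2}$ and the field is honestly Hölder (not differentiable) in $x$, which is what \cite{LX10} requires.
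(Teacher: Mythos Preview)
Your proposal is correct and follows essentially the same approach as the paper: reduce from $u$ to $U$ (the paper does this in one line, invoking the smoothness of $Y$ implicitly, whereas you spell out the power-counting argument), then observe that under $H_2<1$ we have $\sigma(r)=r^{2H_2}$, so Theorem \ref{thm:SLNDU} gives exactly Condition (C) of Luan and Xiao \cite{LX10}, and their Theorem 1.1 yields the Chung LIL for $U$. Your write-up is more detailed on the $Y$-negligibility step and on why $\kappa_7$ is independent of the base point, but the logical skeleton is identical.
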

\begin{proof} We only need to prove that
\begin{equation}\label{Eq:Chung1}
\liminf_{\varepsilon\to 0^+}\sup_{(s,y):\tilde{\rho}(s,y)\le\varepsilon}\frac{|U(t+s,x+y)-U(t,x)|}{\varepsilon 
	(\log\log1/\varepsilon)^{-1/Q}}=\kappa_7,\qquad a.s.
\end{equation}
Thanks to $H_2<1,$ we know $\sigma(r)=r^{2H_2}.$ By Theorem \ref{thm:SLNDU}, 
%we verify 
we see
that $U(t,x)$ satisfies Condition (C) in Luan and Xiao \cite{LX10}, and 
%we have that Eq. 
thus
(\ref{Eq:Chung1})	directly follows from their Theorem 1.1.  
\end{proof}

When $H_2>1,$  the solution process $\{u(t,x)\}$ has a version $\tilde{u}(t,x)$ such that
 $x\mapsto \tilde{u}(t,x)$ is continuously differentiable. More precisely, we now prove  the following result.
\begin{prop}\label{Prop:Smooth}
Suppose that $H_2>1.$ Then the solution process $\{u(t,x)\}$ has a version $\tilde{u}(t,x)$ 
with continuous sample functions such that $\frac{\partial \tilde{u}(t,x)}{\partial x_j}$ ($j=1,\ldots,d$) 
is continuous almost surely. Moreover, for any $M>0$,
there exists a positive positive random variable $K$ with all moments   such that for every $j = 1,  \ldots, d$,
the partial derivative $\frac{\partial}{\partial x_j} \tilde{u}(t,x)$ has the following modulus of continuity on $ [-M, M]^{d} $:
%\begin{equation}\label{mod-derivative}
$$
\sup_{x, y \in [-M, M]^{d}, |x-y|\le \varepsilon} \Big|\frac{\partial}{\partial x_j}\tilde{u}(t,x) - \frac{\partial}{\partial y_j} \tilde{u}(t,y) \Big|
\le K \varepsilon^{H_2-1} \sqrt{\log \frac 1 {\varepsilon}}.
%\end{equation}
$$
\end{prop}
The proof is similar to that of the proof of Theorem 4.8 in Xue and Xiao (2011) and the proof of Theorem 5 in 
Tudor and Xiao (2017). Therefore, we omit it here.

\section{Appendix: Strong local nondeterminism of a family of Gaussian random fields}
\label{Sec:A}

In this appendix, we prove the following general result on strong local nondeterminism for 
%a family of 
a class of
Gaussian random fields with stationary increments, which generalizes Theorem 3.2 in Xiao (2009) 
and may be of independent interest.
\begin{thm}\label{thm:SLND}
Let $\{X(t),\,t\in\R^N\}$ be a real-valued, centered Gaussian random field with stationary increments 
and spectral density $f(\lambda).$ If there exists a vector 
%$\gamma=(\gamma_1,\ldots,\gamma_N)\in(0,\,1)^N$ such that for all $c>0,$
$\gamma=(\gamma_1,\ldots,\gamma_N)\in(0,\,1)^N$ such that for all $a>0$
\begin{equation}\label{Eq:AScaling}
f\left(c^E\lambda\right)\asymp 
%c^{-(2+Q)}f(\lambda)\quad\forall\lambda\in\R^N\backslash\{0\},
a^{-(2+Q)}f(\lambda)\quad\forall\lambda\in\R^N\backslash\{0\},
\end{equation}
where $E$ is an $N\times N$ diagonal matrix with diagonal entries given by $\gamma_1^{-1},
\ldots,\gamma_N^{-1}$ and $Q=\sum_{j=1}^N\gamma_j^{-1}.$ Then, there exists a positive 
%constant $c_{_{\ref{Sec:A},1}}$ such that for any positive integer $n,$ and all $u,\,t^1,\ldots,t^n\in\R^N,$
constant $c$ such that for any positive integer $n,$ and all $u,\,t^1,\ldots,t^n\in\R^N,$
\begin{equation}\label{eq:slnd2}
\var\left(X(u)\big | X(t^1),\ldots,X(t^n)\right)\ge 
%c_{_{\ref{Sec:A},1}}\min_{k=0,\ldots,n}\rho(u,t^k)^2,
c\min_{k=0,\ldots,n}\rho(u,t^k)^2,
\end{equation}
where $t^0=0$ and $\rho(u,t):=\sum_{j=1}^N|u_j-t_j|^{\gamma_j}.$
\end{thm}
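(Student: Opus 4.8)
The plan is to prove Theorem~\ref{thm:SLND} by a standard Fourier-analytic argument for strong local nondeterminism, exploiting the scaling hypothesis \eqref{Eq:AScaling} to reduce everything to a single ``unit-scale'' estimate. First I would recall the spectral representation: since $\{X(t)\}$ has stationary increments with spectral density $f$, one can write $X(t) = \int_{\R^N} (e^{i\langle t,\lambda\rangle}-1)\, W(d\lambda)$ for a suitable complex Gaussian measure $W$ with control measure $f(\lambda)\,d\lambda$. The conditional variance $\var(X(u)\mid X(t^1),\ldots,X(t^n))$ is the squared $L^2(\pr)$-distance from $X(u)$ to the subspace spanned by $X(t^1),\ldots,X(t^n)$, hence equals
\[
\inf_{a_1,\ldots,a_n\in\R}\ex\Big(X(u)-\sum_{k=1}^n a_k X(t^k)\Big)^2
= \inf_{a_k}\int_{\R^N}\Big|(e^{i\langle u,\lambda\rangle}-1)-\sum_{k=1}^n a_k(e^{i\langle t^k,\lambda\rangle}-1)\Big|^2 f(\lambda)\,d\lambda.
\]
Writing $t^0=0$, $a_0 = 1-\sum_{k\ge1}a_k$, this is $\inf\int |\sum_{k=0}^n a_k e^{i\langle t^k,\lambda\rangle}|^2 f(\lambda)\,d\lambda$ subject to $\sum_{k=0}^n a_k = 0$, which in particular is bounded below by the same infimum restricted to any subset of frequency space.

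Next I would set $r := \min_{0\le k\le n}\rho(u,t^k)$ and choose a smooth ``bump'' test function: pick $\delta\in C_c^\infty(\R^N)$ with $\delta(0)=1$ and $\widehat\delta$ supported in a fixed neighborhood of the origin, and rescale anisotropically by the matrix $E$, i.e. consider $\delta_r(t) := \delta(r^{-E}(t-u))$ where $r^{-E}$ is the diagonal matrix with entries $r^{-1/\gamma_j}$. The key point is that $\rho(u,t^k)\ge r$ forces $r^{-E}(t^k-u)$ to lie outside a fixed ball for every $k\ge 1$ (and for $k=0$, i.e. $t^0$, provided $\rho(u,0)\ge r$ too), so $\delta_r(t^k)=0$ for all $k=0,1,\ldots,n$ while $\delta_r(u)=1$. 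A Plancherel/Cauchy--Schwarz argument then gives, for any admissible $(a_k)$,
\[
1 = \Big|\sum_{k=0}^n a_k\,\delta_r(t^k)\Big|^2
= (2\pi)^{-2N}\Big|\int_{\R^N}\sum_{k=0}^n a_k e^{i\langle t^k,\lambda\rangle}\,\widehat{\delta_r}(\lambda)\,d\lambda\Big|^2
\le C\int_{\R^N}\Big|\sum_k a_k e^{i\langle t^k,\lambda\rangle}\Big|^2 f(\lambda)\,d\lambda \cdot \int_{\R^N}\frac{|\widehat{\delta_r}(\lambda)|^2}{f(\lambda)}\,d\lambda,
\]
using that $\sum a_k=0$ kills the behavior near $\lambda=0$ where $|\widehat{\delta_r}|$ is actually $O(|\lambda|)$ after accounting for the mean-zero cancellation — more precisely one applies Cauchy--Schwarz to $\sum a_k(e^{i\langle t^k,\lambda\rangle}-1)$ against $\widehat{\delta_r}(\lambda)$, which is legitimate since $\int \delta_r = \widehat{\delta_r}(0)$ is harmless. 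Rearranging yields
\[
\var\big(X(u)\mid X(t^1),\ldots,X(t^n)\big)\ \ge\ C^{-1}\Big(\int_{\R^N}\frac{|\widehat{\delta_r}(\lambda)|^2}{f(\lambda)}\,d\lambda\Big)^{-1}.
\]

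Finally, the scaling hypothesis closes the loop. By the anisotropic change of variables $\lambda = r^{-E}\eta$ one has $\widehat{\delta_r}(\lambda) = r^{Q}\,\widehat\delta(r^{E}\lambda)$ (with $Q=\sum\gamma_j^{-1} = \operatorname{tr}E$ the Jacobian exponent), so
\[
\int_{\R^N}\frac{|\widehat{\delta_r}(\lambda)|^2}{f(\lambda)}\,d\lambda
= r^{2Q}\int_{\R^N}\frac{|\widehat\delta(r^E\lambda)|^2}{f(\lambda)}\,d\lambda
= r^{2Q}\cdot r^{-Q}\int_{\R^N}\frac{|\widehat\delta(\eta)|^2}{f(r^{-E}\eta)}\,d\eta,
\]
and by \eqref{Eq:AScaling} with $c = r^{-1}$ (taking $a = r^{-1}$, so $a^E = r^{-E}$), $f(r^{-E}\eta)\asymp r^{-(2+Q)}f(\eta)$, giving $\int |\widehat{\delta_r}|^2/f \asymp r^{2Q-Q+2+Q} \int |\widehat\delta(\eta)|^2/f(\eta)\,d\eta = C\, r^{2}$, where $\int |\widehat\delta|^2/f < \infty$ because $\widehat\delta$ is supported near the origin and $1/f$ is, by the same scaling, at worst polynomially growing there. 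Hence $\var(X(u)\mid X(t^1),\ldots,X(t^n))\ge c\,r^{2} = c\min_{0\le k\le n}\rho(u,t^k)^2$, which is \eqref{eq:slnd2}. The main obstacle I anticipate is the bookkeeping at the low-frequency end: one must verify that the increments-cancellation $\sum a_k = 0$ really does make the offending integral $\int|\widehat{\delta_r}(\lambda)|^2/f(\lambda)\,d\lambda$ finite despite $f(\lambda)\to\infty$ as $\lambda\to 0$ (a density of a stationary-increment field typically blows up at $0$), and that the constant $c$ comes out uniform in $n$ and in the configuration $u,t^1,\ldots,t^n$ — both handled by the fact that only the ratio $|\widehat\delta(\eta)|^2/f(\eta)$ on a fixed support enters after rescaling, but it requires care to present cleanly. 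I would also double-check that one may legitimately split off the $k=0$ term, i.e. that $\rho(u,0)$ is among the quantities being minimized (as the statement's $t^0=0$ convention makes explicit), so that $\delta_r$ vanishes at $t^0=0$ as well.
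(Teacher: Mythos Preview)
Your overall strategy is exactly the paper's: reduce the conditional variance to an $L^2$-infimum via the spectral representation, test against an anisotropically rescaled bump $\delta_r$, use $\delta_r(u)=1$ and $\delta_r(t^k)=0$ together with Cauchy--Schwarz, and then invoke the scaling \eqref{Eq:AScaling} to extract the factor $r^2$. So there is no new idea needed; the approach is correct in outline.

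That said, several details in your write-up are wrong and would not survive as written. First, you cannot ask for $\delta\in C_c^\infty(\R^N)$ \emph{and} $\widehat\delta$ compactly supported; this is forbidden by Paley--Wiener. The paper takes $\delta\in C^\infty$ with $\delta(0)=1$ and $\delta$ vanishing outside the unit $\rho$-ball, so that $\widehat\delta$ is merely rapidly decreasing --- which is all you need, since by \eqref{Eq:AScaling} the reciprocal $1/f$ grows at most polynomially. Second, your displayed identity $1=\big|\sum_{k=0}^n a_k\,\delta_r(t^k)\big|^2$ is false (that sum is zero); the correct quantity is $\delta_r(u)-\sum_k a_k\,\delta_r(t^k)$, which the paper obtains as the inverse Fourier integral $J=(2\pi)^N r^{-Q}$. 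Third, your scaling computation has a sign error: with $c=r^{-1}$ in \eqref{Eq:AScaling} one gets $f(r^{-E}\eta)\asymp r^{\,2+Q}f(\eta)$, not $r^{-(2+Q)}$, and with the correct sign your integral $\int|\widehat{\delta_r}|^2/f$ comes out $\asymp r^{-2}$ (in your normalization), not $r^{2Q+2}$; this then gives $\var\ge c\,r^2$ cleanly. Finally, your anticipated ``low-frequency obstacle'' is a non-issue: if $f(\lambda)\to\infty$ at the origin then $1/f\to 0$ there, so $\int|\widehat\delta|^2/f$ is harmless near $0$; the only convergence to check is at infinity, which follows from rapid decay of $\widehat\delta$ against polynomial growth of $1/f$.
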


\begin{proof}
	Denote $ r \equiv \min\limits_{0 \le k \le n}\rho(u, t^k).$ Since
	the conditional variance in (\ref{eq:slnd2}) is the square of the
	$L^2(\pr)$-distance of $X(u)$ from the subspace generated by
	$\{X(t^1), \ldots, X(t^n)\}$, it is sufficient to prove that for all
	$a_k \in \R$ ($1 \le k \le n$),
	\begin{equation}\label{eq:slndl4}
	\EE \bigg(X(u)-\sum_{k=1}^n a_k\, X(t^k) \bigg)^2 \ge 
	%c_{_{\ref{Sec:A},1}}\,
	c\,
	r^{2}
	\end{equation}
	%where $c_{_{\ref{Sec:A},1}}>0$ is a constant which may only depend on $\gamma$ and $N$.
     where $c>0$ is a constant which depends only on $\gamma$ and $N$.
	
	By the stochastic integral representation [cf. (2.9) in Xiao (2009)] of $X$, and 
	thanks to the fact that $X$ has stationary increments,
	the left hand side of (\ref{eq:slndl4}) can be written as
	\begin{equation} \label{eq:slndl5}
	\EE \bigg(X(u)-\sum_{k=1}^n a_k X(t^k) \bigg)^2
	=\int_{\R^N}\bigg|e^{i\la u, \lambda \ra}-1 - \sum_{k=1}^{n} a_k\,
	\big(e^{i\la t^k, \,\lambda \ra}-1\big)\bigg|^2\, f(\lambda)\, d \lambda.
	\end{equation}
	Hence, we only need  to show
	\begin{equation}\label{Eq:slnd6}
	\begin{split}
	\int_{\R^N}\Big|e^{i\la u, \lambda \ra} - \sum_{k=0}^{n} a_k\, e^{i\la t^k,
		\,\lambda \ra } \Big|^2\, f(\lambda)\, d \lambda \ge c_{_{\ref{Sec:A},1}}\, r^{2},
	\end{split}
	\end{equation}
	where $t^0 = 0$ and $a_0 = -1 + \sum_{k=1}^n a_k$.
	
	Let $\delta(\cdot): \R^N \to [0, 1]$ be a function in $C^{\infty}
	(\R^N)$ such that $\delta (0)= 1$ and 
	%it vanishes outside the open
	$\delta$ vanishes outside the open
	ball $B_\rho(0, 1)$ in the metric $\rho$. Denote by $\widehat{\delta}$
	the Fourier transform of $\delta$. Then $\widehat{\delta} (\cdot)
	\in C^{\infty} (\R^N)$ as well and $\widehat{\delta} (\lambda)$ decays
	rapidly as $|\lambda| \to \infty$.
	
    Let $\delta_r(t)= r^{-Q}\delta(r^{-E} t)$. Then the
	inverse Fourier transform and a change of variables yield
	\begin{equation}\label{Eq:delta}
	\delta_r(t) = (2 \pi)^{-N}\int_{\R^N} e^{-i \la t, \lambda \ra}\,
	\widehat{\delta}(r^E \lambda)\, d \lambda.
	\end{equation}
	Since $\min\{\rho(u, t^k): 0 \le k \le n\} \ge r$, we have
	$\delta_r(u-t^k) = 0$ for $k=0, 1, \ldots, n$. This and
	(\ref{Eq:delta}) together imply that
	\begin{equation} \label{Eq:P1}
	\begin{split}
	J &:= \int_{\R^N} \bigg(e^{i\la u, \lambda \ra} - \sum_{k=0}^n a_k\, e^{i\la
		t^k, \lambda \ra} \bigg)\, e^{-i \la u, \lambda \ra}\,
	\widehat{\delta}(r^E \lambda)\, d \lambda  \\
	&  = (2 \pi)^N \bigg( \delta_r (0) - \sum_{k=0}^n a_k\,
	\delta_r (u - t^k) \bigg) \\
	& = (2 \pi)^N\, r^{-Q}.
	\end{split}
	\end{equation}
	On the other hand, by the Cauchy-Schwarz inequality, (\ref{Eq:AScaling}) and
	(\ref{eq:slndl5}), we have
	\begin{equation}\label{Eq:P2}
	\begin{split}
	J^2 &\le \int_{\R^N} \Big|e^{i\la u, \lambda\ra} - \sum_{k=0}^n a_k\,
	e^{i\la t^k, \lambda\ra} \Big|^2\, f(\lambda) \, d \lambda \cdot \int_{\R^N} \frac
	1  {f(\lambda)}\, \Big|
	\widehat{\delta}(r^E \lambda)\Big|^2\, d \lambda  \\
	&\le \EE\bigg( X(u) - \sum_{k=1}^n a_k X(t^k)\bigg)^2 \, \cdot
	r^{-Q}\, \int_{\R^N} \frac 1  {f( r^{-E}\, \lambda)}\, \Big|
	\widehat{\delta}(\lambda) \Big|^2\, d \lambda\\
	&\le c\, \EE\bigg( X(u) - \sum_{k=1}^n a_k X(t^k)\bigg)^2 \, \cdot
	r^{-2Q -2},
	\end{split}
	\end{equation}
	%where $c> 0$ is a constant which may only depend on $H$ and $N$.
	where $c> 0$ is a constant which only depends on $H$ and $N$.
	
	We square both sides of (\ref{Eq:P1}) and use (\ref{Eq:P2}) to
	obtain
	\[
	(2 \pi)^{2N}\, r^{-2Q} \le c\, r^{-2Q -2}\, \EE\bigg( X(u) -
	\sum_{k=1}^n a_k X(t^k)\bigg)^2.
	\]
	Hence (\ref{Eq:slnd6}) holds. This finishes the proof of the
	theorem.
\end{proof}

\begin{quote}
\begin{small}

\noindent \textsc{Randall Herrell}.
        Department of Mathematical Sciences,
        University of Alabama in Huntsville,
        Huntsville, AL 35899, U.S.A.\\
        E-mail: \texttt{rth0004@uah.edu}

\noindent \textsc{Renming Song}.
        Department of Mathematics,
        University of Illinois,
        Urbana, IL 61801, U.S.A.\\
        E-mail: \texttt{rsong@illinois.edu}\\
        URL: \texttt{http://www.math.illinois.edu/\~{}rsong}

 \noindent \textsc{Dongsheng Wu}.
        Department of Mathematical Sciences,
        University of Alabama in Huntsville,
        Huntsville, AL 35899, U.S.A.\\
        E-mail: \texttt{dongsheng.wu@uah.edu}\\
        URL: \texttt{http://webpages.uah.edu/\~{}dw0001}

 \noindent \textsc{Yimin Xiao}.
        Department of Statistics and Probability,
        Michigan State University,
        East Lansing, MI 48824, U.S.A.\\
        E-mail: \texttt{xiao@stt.msu.edu}\\
        URL: \texttt{www.stt.msu.edu/\~{}xiaoyimi}

\end{small}
\end{quote}
\end{document}